\newtheorem{theorem}{Theorem}[section]
\newtheorem{lemma}[theorem]{Lemma}
\newtheorem{proposition}[theorem]{Proposition}
\newtheorem{corollary}[theorem]{Corollary}
\theoremstyle{definition}
\newtheorem{definition}[theorem]{Definition}
\newtheorem{example}[theorem]{Example}
\newtheorem{remark}[theorem]{Remark}
\newcommand{\func}[1]{\operatorname{#1}}
\def\Id{{\rm Id}}
\def\Min{{\rm Min}}
\def\Hom{{\rm Hom}}
\def\ann{\func{ann}}
\newcounter{num}\setcounter{num}{1}
\begin{document}

\title{Idempotent generated algebras and Boolean powers of commutative rings}
\author{G.~Bezhanishvili, V.~Marra, P.~J.~Morandi, B.~Olberding}

\date{}

\begin{abstract}
A Boolean power $S$ of a commutative ring $R$ has the structure of a commutative $R$-algebra, and with respect to this structure, each element of $S$ can be written uniquely as an $R$-linear combination of orthogonal idempotents so that the sum of the idempotents is $1$ and their coefficients are distinct. In order to formalize this decomposition property, we introduce the concept of a Specker $R$-algebra, and we prove that the Boolean powers of $R$ are up to isomorphism precisely the Specker $R$-algebras. We also show that these algebras are characterized in terms of a functorial construction having roots in the work of Bergman and Rota. When $R$ is indecomposable, we prove that $S$ is a Specker $R$-algebra iff $S$ is a projective $R$-module, thus strengthening a theorem of Bergman, and when $R$ is a domain, we show that $S$ is a Specker $R$-algebra iff $S$ is a torsion-free $R$-module.

For an indecomposable $R$, we prove that the category of Specker $R$-algebras is equivalent to the category of Boolean algebras, and hence is dually equivalent to the category  of Stone spaces. In addition, when $R$ is a domain, we show that the category of Baer Specker $R$-algebras is equivalent to the category of complete Boolean algebras, and hence is dually equivalent to the category of extremally disconnected compact Hausdorff spaces.

For a totally ordered $R$, we prove that there is a unique partial order on a Specker $R$-algebra $S$ for which it is an $f$-algebra over $R$, and show that $S$ is equivalent to the $R$-algebra of piecewise constant continuous functions from a Stone space $X$ to $R$ equipped with the interval topology.
\end{abstract}

\subjclass[2000]{16G30; 06E15; 54H10; 06F25}
\keywords{Algebra over a commutative ring, idempotent generated algebra, Boolean power, Stone space, Baer ring, $f$-ring, Specker $\ell$-group}

\maketitle

\section{Introduction}

For a commutative ring $R$ and a Boolean algebra $B$, the Boolean power of $R$ by $B$ is the $R$-algebra $C(X,R_{\func{disc}})$ of continuous functions from the Stone space $X$ of $B$ to the discrete space $R$ (see, e.g., \cite{BN80} or \cite{BS81}). Each element of a  Boolean power of $R$ can be written uniquely as an $R$-linear combination of orthogonal idempotents so that the sum of the idempotents is $1$ and their coefficients are distinct. In this note we formalize this decomposition property by introducing the class of Specker $R$-algebras. We prove that an $R$-algebra $S$ is isomorphic to a Boolean power of $R$ iff $S$ is a Specker $R$-algebra, and we characterize Specker $R$-algebras (hence Boolean powers of $R$) in several other ways for various choices of the commutative ring $R$, such as when $R$ is indecomposable, an integral domain, or   totally ordered.

Our terminology is motivated by Conrad's concept of a Specker $\ell$-group. We recall \cite[Sec.~4.7]{Con74} that an element $g > 0$ of an $\ell$-group $G$ is singular if $h \wedge (g-h) = 0$ for all $h\in G$ with $0 \le h \le g$, and that $G$ is a Specker $\ell$-group if it is generated by its singular elements. Conrad proved in \cite[Sec.~4.7]{Con74} that a Specker $\ell$-group admits a unique multiplication such that $gh = g \wedge h$ for all singular elements $g,h$. Under this multiplication, the singular elements become idempotents, and hence a Specker $\ell$-group with strong order unit, when viewed as a ring, is generated as a ${\mathbb{Z}}$-algebra by its idempotents. Moreover, it is a torsion-free ${\mathbb{Z}}$-algebra, and hence its elements admit a unique orthogonal decomposition. Our definition of a Specker $R$-algebra extracts these key features of Specker $\ell$-groups.

For a commutative ring $R$, we give several equivalent characterizations for a commutative $R$-algebra to be a Specker $R$-algebra. One of these characterizations produces a functor from the category {\bf BA} of Boolean algebras to the category ${\bf Sp}_R$ of Specker $R$-algebras. This functor has its roots in the work of Bergman \cite{Ber72} and Rota \cite{Rot73}. We show this functor is left adjoint to the functor that sends a Specker $R$-algebra to its Boolean algebra of idempotents. We prove that the ring $R$ is indecomposable iff these functors establish an equivalence of ${\bf Sp}_R$ and {\bf BA}. It follows then from Stone duality that when $R$ is indecomposable, ${\bf Sp}_R$ is dually equivalent to the category {\bf Stone} of Stone spaces (zero-dimensional compact Hausdorff spaces). Hence, when $R$ is indecomposable, Specker $R$-algebras are algebraic counterparts of Stone spaces in the category of commutative $R$-algebras.

It follows from the work of Bergman \cite{Ber72} that every Specker $R$-algebra is a free $R$-module. For an indecomposable $R$, we show that the converse is also true. In fact, we prove a stronger result: An idempotent generated commutative $R$-algebra $S$ (with $R$ indecomposable) is a Specker $R$-algebra iff $S$ is a projective $R$-module. A simple example shows that the assumption of indecomposability is necessary here. When $R$ is a domain, an even stronger result is true: $S$ is a Specker $R$-algebra iff $S$ is a torsion-free $R$-module. Thus, the case when $R$ is a domain provides the most direct generalization of the $\ell$-group case.

For a domain $R$, we prove that the Stone space of a Specker $R$-algebra $S$ can be described as the space of minimal prime ideals of $S$, and that a Specker $R$-algebra $S$ is injective iff $S$ is a Baer ring. This yields an equivalence between the category ${\bf BSp}_R$ of Baer Specker $R$-algebras and the category {\bf cBA} of complete Boolean algebras, and hence a dual equivalence between ${\bf BSp}_R$ and the category {\bf ED} of extremally disconnected compact Hausdorff spaces.

We conclude the article by considering the case when $R$ is a totally ordered ring. It is then automatically indecomposable. We prove that there is a unique partial order on a Specker $R$-algebra $S$ for which it is an $f$-algebra over $R$, and show that $S$ is isomorphic to the $R$-algebra of piecewise constant continuous functions from a Stone space $X$ to $R$, where $R$ is given the interval topology.  These results give a more general point of view on similar results obtained for the case $R = {\mathbb{Z}}$ by Ribenboim \cite{Rib69} and Conrad \cite{Con74}, as well as  for the case $R = {\mathbb{R}}$ as considered in \cite{BMO13a}.

\section{Specker algebras and Boolean powers of a commutative ring}\label{Specker}

All algebras considered in this article are commutative and unital, and all algebra homomorphisms are unital. Throughout $R$ will be a commutative ring with $1$. In this section we introduce Specker $R$-algebras and use them to characterize Boolean powers of $R$. A key property of Specker $R$-algebras is that their elements can be decomposed uniquely into $R$-linear combinations of idempotents so that the sum of the idempotents is $1$ and their coefficients are distinct. We begin the section by formalizing the terminology needed to make precise this decomposition property.

Let $S$ be a commutative $R$-algebra. As $S$ is a commutative ring with $1$, it is well known that the set $\Id(S)$ of idempotents of $S$ is a Boolean algebra via the operations
$$ e \vee f = e + f -ef, \ \
e \wedge f = ef, \ \
\lnot e = 1-e.$$
We call an $R$-algebra $S$ \emph{idempotent generated} if $S$ is generated as an $R$-algebra by a set of idempotents. If the idempotents belong to some Boolean subalgebra $B$ of $\Id(S)$, we say that \emph{$B$ generates $S$}. Because we are assuming $S$ is commutative, each monomial $e_1^{n_1} \cdots e_r^{n_r}$ of idempotents is equal to $e_1 \cdots e_r$, which is then equal to $e_1 \wedge \cdots \wedge e_r$, an idempotent in $S$. Therefore, since each element of $S$ is an $R$-linear combination of monomials of idempotents, each element is, in fact, an $R$-linear combination of idempotents. Thus, an idempotent generated $R$-algebra $S$ is generated as an $R$-module by its idempotents, and if $B$ generates $S$, then $B$ generates $S$ both as an $R$-algebra and as an $R$-module.

We call a set $E$ of nonzero idempotents of $S$ \emph{orthogonal} if $e\wedge f=0$ for all $e\ne f$ in $E$, and we say that $s\in S$ has an \emph{orthogonal decomposition} or that $s$ is in \emph{orthogonal form} if $s=\sum_{i=1}^n a_ie_i$ with the $e_i\in\Id(S)$ orthogonal. If, in addition, $\bigvee e_i = 1$, we call the decomposition a \emph{full orthogonal decomposition}. By possibly adding a term with a $0$ coefficient, we can turn any orthogonal decomposition into a full orthogonal decomposition.

We call a nonzero idempotent $e$ of $S$ \emph{faithful} if for each $a\in R$, whenever $ae = 0$, then $a = 0$. Let $B$ be a Boolean subalgebra of $\Id(S)$ that generates $S$. We say that $B$ is a \emph{faithful generating algebra of idempotents of $S$} if each nonzero $e\in B$ is faithful.

\begin{lemma}\label{lem:2.1}
Let $S$ be a commutative $R$-algebra and let $B$ be a Boolean subalgebra of $\Id(S)$ that generates $S$. Then each $s\in S$ can be written in full orthogonal form $s=\sum_{i=1}^n a_ie_i$, where the $a_i\in R$ are distinct and $e_i \in B$. Moreover, such a decomposition is unique iff $B$ is a faithful generating algebra of idempotents of $S$.
\end{lemma}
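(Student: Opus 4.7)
My plan has three parts: existence of a full orthogonal decomposition with distinct coefficients, uniqueness when $B$ is faithful, and failure of uniqueness when $B$ is not faithful.

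For existence, I would start from the fact (noted in the paragraph preceding the lemma) that an idempotent generated $R$-algebra is generated as an $R$-module by its generating idempotents, so $s = \sum_{j=1}^m c_j f_j$ for some $c_j \in R$ and $f_j \in B$. To refine the $f_j$ into orthogonal idempotents, I would apply the standard partition-of-unity trick inside the Boolean algebra $B$: for each $\epsilon \in \{0,1\}^m$ put $g_\epsilon = \bigwedge_j f_j^{\epsilon_j}$, where $f_j^1 = f_j$ and $f_j^0 = 1 - f_j$. The nonzero $g_\epsilon$ are pairwise orthogonal in $B$ and join to $1$, and each $f_j$ is the join (hence sum) of those $g_\epsilon$ with $\epsilon_j = 1$. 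Substituting gives $s = \sum_\epsilon d_\epsilon g_\epsilon$ for suitable $d_\epsilon \in R$. To make coefficients distinct, I would collect terms with equal $d_\epsilon$'s, using the identity $ae + ae' = a(e \vee e')$ for orthogonal $e,e' \in B$, and drop no terms (any zero-coefficient survivor can be retained to keep the decomposition full).

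For uniqueness assuming faithfulness, suppose $\sum_{i=1}^n a_i e_i = \sum_{j=1}^m b_j f_j$ are two full orthogonal forms with distinct coefficients. Multiplying by $e_i$ and using $e_i e_k = 0$ for $k \ne i$, $e_i^2 = e_i$, and $\sum_j f_j = e_i \sum_j f_j = e_i$ (valid because the $f_j$ are orthogonal with join $1$), I obtain
\[
\sum_{j=1}^m (a_i - b_j)(e_i f_j) = 0.
\]
The elements $e_i f_j \in B$ are pairwise orthogonal, so multiplying through by a single $e_i f_j$ isolates its term: $(a_i - b_j)(e_i f_j) = 0$. Since $e_i f_j$ lies in $B$, faithfulness forces $a_i = b_j$ whenever $e_i f_j \ne 0$. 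Because $\sum_j e_i f_j = e_i \ne 0$, at least one such $j$ exists, and distinctness of the $b_j$'s makes it unique; denote it $\sigma(i)$. Distinctness of the $a_i$'s then makes $\sigma$ injective, and reversing the roles shows it is a bijection. Finally $e_i = \sum_j e_i f_j = e_i f_{\sigma(i)}$, so $e_i \le f_{\sigma(i)}$, and the symmetric argument gives equality.

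For the converse, suppose $B$ is not faithful, so some nonzero $e \in B$ satisfies $ae = 0$ for some nonzero $a \in R$. If $e = 1$, then $a = a \cdot 1 = ae = 0$, a contradiction; hence $e \ne 1$ and $1 - e$ is a nonzero idempotent in $B$. Then
\[
0 = 0 \cdot 1 = a \cdot e + 0 \cdot (1-e)
\]
exhibits two distinct full orthogonal decompositions of $0 \in S$ whose coefficients are distinct (since $a \ne 0$), violating uniqueness. The main obstacle is the bookkeeping in the uniqueness step, in particular justifying that we may freely multiply the identity $\sum a_i e_i = \sum b_j f_j$ by products like $e_i f_j$ and then invoke faithfulness against the Boolean element $e_i f_j \in B$; the rest is a matter of carefully tracking how distinctness of coefficients combined with orthogonality forces the matching of terms.
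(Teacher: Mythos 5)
Your proof is correct and follows essentially the same route as the paper: the standard atomization $\bigwedge_j f_j^{\epsilon_j}$ for existence, and multiplication by the products $e_if_j$ combined with faithfulness for uniqueness. The only (minor, welcome) difference is that by writing the comparison as $\sum_j(a_i-b_j)e_if_j=0$ you treat zero coefficients uniformly, whereas the paper first matches the terms with $a_i\ne 0$ and then handles the zero-coefficient idempotent separately.
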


\begin{proof}
The proof that each $s \in S$ can be written in full orthogonal form is a standard argument: Write $s=\sum_{i=1}^n a_ie_i$ with $a_i\in R$ and $e_i\in B$. Each $e_i$ can then be refined into a sum of idempotents, each of which is a meet of a set of idempotents in $\{e_1,\ldots,e_n,1-e_1,\ldots,1-e_n\}$, in such a way that the resulting refinements of the $e_i$ are orthogonal. By combining terms with the same coefficient, $s$ can be written in orthogonal form with distinct coefficients. If the decomposition is not in full orthogonal form, adding the term $0f$, where $f$ is the negation of the join of the idempotents in the decomposition, turns it into a full orthogonal decomposition.

Suppose that each element has a unique full orthogonal decomposition and suppose that $ae = 0$ for some $a \in R$ and nonzero $e \in B$. Then since $ae = 0e$, uniqueness implies that $a = 0$, and hence $e$ is faithful.  Conversely, suppose that $B$ is a faithful generating algebra of idempotents of $S$. Let $s \in S$ and write $s = \sum_i a_i e_i = \sum_j b_j f_j$ with each sum a full orthogonal decomposition with distinct coefficients. First consider $i$ with $a_i \ne 0$. Multiplying both sides by $e_i$ yields $a_i e_i = \sum_j b_j (e_i f_j)$. Since $e_i$ is faithful and $a_i \ne 0$, there is $j$ with $e_i f_j \ne 0$. Multiplying by $f_j$ yields $a_i e_if_j = b_j e_i f_j$. Therefore, since $e_i f_j$ is faithful, $a_i = b_j$. Because the $b_j$ are distinct, there is a unique $j$ with $e_i f_j \ne 0$. Since $a_i = b_j$, we then have $a_i e_i = b_j e_i f_j = a_i e_i f_j$. Thus, $a_i (e_i \wedge \lnot f_j) = 0$, so by faithfulness, $e_i \wedge \lnot f_j = 0$, hence $e_i \le f_j$. Reversing the roles of $i$ and $j$ yields $f_j \le e_i$, so $e_i = f_j$. This implies that, after suitable renumbering, $e_i = f_i$ and $a_i = b_i$ for each $i$ with $a_i \ne 0$. If the decomposition $\sum a_i e_i$ has a zero coefficient, say $0 = a_k$, then as the decomposition is full, $e_k = \lnot(\bigvee_{i \ne k}e_i)$, which implies that the idempotent corresponding to a zero coefficient is uniquely determined. Consequently, $s$ has a unique full orthogonal decomposition.
\end{proof}

\begin{remark}\label{uniqueness}
\begin{enumerate}
\item[]
\item Orthogonal and full orthogonal decompositions will be our main technical tool. As we already pointed out, any orthogonal decomposition can be turned into a full orthogonal decomposition by possibly adding a term with a $0$ coefficient, so depending on our need, we will freely work with either orthogonal or full orthogonal decompositions. If $B$ is a faithful generating algebra of idempotents of $S$ and $s\in S$ is nonzero, then by possibly dropping a term with a $0$ coefficient, the same argument as in the proof of Lemma~\ref{lem:2.1} produces a unique orthogonal decomposition $s=\sum_{i=1}^n a_ie_i$, where the $a_i\in R$ are distinct and nonzero.

\item The same type of argument as in the proof of Lemma~\ref{lem:2.1} shows that if $e_1,\dots,e_n$ is an orthogonal set of faithful idempotents and $\sum a_i e_i = \sum b_i e_i$ for $a_i, b_i \in R$, then $a_i = b_i$ for each $i$. This holds regardless of whether the coefficients in either expression are distinct. We will use this fact several times.
\end{enumerate}
\end{remark}

%This motivates the following key definition.

\begin{definition}\label{def of Specker}
We call an $R$-algebra $S$ a \emph{Specker $R$-algebra} if $S$ is a commutative $R$-algebra that has a faithful generating algebra of idempotents.
\end{definition}

Obviously each Specker $R$-algebra is idempotent generated. Moreover, if $S$ is a Specker $R$-algebra, then $1 \in \Id(S)$ is faithful, which means the natural map $R \to S$ sending $a\in R$ to $a\cdot 1\in S$ is 1-1. Thus, $R$ is isomorphic to an $R$-subalgebra of $S$. Throughout we will freely identify $R$ with an $R$-subalgebra of $S$.

To characterize Specker $R$-algebras among idempotent generated commutative $R$-algebras, we introduce a construction that associates with each Boolean algebra $B$ an idempotent generated commutative $R$-algebra $R[B]$. This construction has its roots in the work of Bergman \cite{Ber72} and Rota \cite{Rot73}.

\begin{definition}\label{def:R[B]}
Let $B$ be a Boolean algebra. We denote by $R[B]$ the quotient ring $R[\{x_e : e\in B\}]/I_B$ of the polynomial ring over $R$ in variables indexed by the elements of $B$ modulo the ideal $I_B$ generated by the following elements, as $e,f$ range over $B$: $$x_{e\wedge f} - x_e x_f, \ \ x_{e\vee f} - (x_e + x_f - x_e x_f), \ \ x_{\lnot e} - (1-x_e), \ \ x_0.$$
\end{definition}

For $e\in B$ we set $y_e = x_e + I_B \in R[B]$. Considering the generators of $I_B$, we see that, for all $e,f\in B$: $$y_{e\wedge f}=y_e y_f, \ \ y_{e\vee f}= y_e + y_f - y_e y_f, \ \ y_{\lnot e}=1-y_e, \ \ y_0=0.$$

It is obvious that $R[B]$ is a commutative $R$-algebra. From the relations above it is also clear that $y_e$ is an idempotent of $R[B]$ for each $e\in B$. Therefore, each $s\in R[B]$ can be written as $s=\sum a_i y_{e_i}$ with $a_i\in R$ and $e_i\in B$. Thus, $R[B]$ is idempotent generated. Moreover, $i_B:B\to\Id(R[B])$ given by $i_B(e)=y_e$ is a well-defined Boolean homomorphism. The following universal mapping property is an easy consequence of the definition of $R[B]$.

\begin{lemma}\label{UMP}
Let $S$ be a commutative $R$-algebra. If $B$ is a Boolean algebra and $\sigma : B \to \Id(S)$ is a Boolean homomorphism, then there is a unique $R$-algebra homomorphism $\alpha : R[B] \to S$ satisfying $\alpha \circ i_B = \sigma$.
\end{lemma}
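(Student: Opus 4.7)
The plan is a direct application of the universal property of polynomial rings followed by a passage to the quotient. Write $P = R[\{x_e : e \in B\}]$ for the ambient polynomial ring, so $R[B] = P/I_B$ and $y_e = x_e + I_B$.

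First I would define a candidate $R$-algebra homomorphism $\tilde{\alpha} : P \to S$ by prescribing $\tilde{\alpha}(x_e) = \sigma(e)$ for each $e \in B$. This is well defined and unique by the universal property of $P$ as a polynomial $R$-algebra on the variables $\{x_e\}_{e\in B}$. Next I would verify that $I_B \subseteq \ker(\tilde{\alpha})$ by checking that each of the four families of generators is killed. Using that $\sigma$ is a Boolean homomorphism into $\Id(S)$, together with the formulas $e \wedge f = ef$, $e \vee f = e + f - ef$, and $\lnot e = 1 - e$ that hold in $\Id(S)$, we get
\[
\tilde{\alpha}(x_{e \wedge f} - x_e x_f) = \sigma(e \wedge f) - \sigma(e)\sigma(f) = \sigma(e)\sigma(f) - \sigma(e)\sigma(f) = 0,
\]
and similarly $\tilde{\alpha}(x_{e \vee f} - (x_e + x_f - x_ex_f)) = 0$, $\tilde{\alpha}(x_{\lnot e} - (1-x_e)) = 0$, and $\tilde{\alpha}(x_0) = \sigma(0) = 0$. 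Hence $\tilde{\alpha}$ descends to an $R$-algebra homomorphism $\alpha : R[B] \to S$ with $\alpha(y_e) = \sigma(e)$, which is exactly the required compatibility $\alpha \circ i_B = \sigma$.

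For uniqueness, suppose $\beta : R[B] \to S$ is any $R$-algebra homomorphism with $\beta \circ i_B = \sigma$. Then $\beta(y_e) = \sigma(e) = \alpha(y_e)$ for every $e \in B$. Since $R[B]$ is generated as an $R$-algebra by $\{y_e : e \in B\}$ (indeed, as noted just before the lemma, every element of $R[B]$ is an $R$-linear combination of the $y_e$), and since both $\alpha$ and $\beta$ are $R$-algebra homomorphisms, they agree on a generating set and hence agree everywhere.

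There is essentially no obstacle here: the entire content is the routine verification that the four defining relations of $I_B$ are forced by $\sigma$ being a Boolean homomorphism; everything else is formal from the universal property of the polynomial ring and the quotient. The only conceptual point worth flagging is that the Boolean structure on $\Id(S)$ in Definition~\ref{def of Specker} (namely $\wedge = \cdot$, $\vee = +\,-\cdot$, $\lnot = 1-$) matches exactly the defining relations of $I_B$, which is precisely what makes the construction $R[-]$ behave universally.
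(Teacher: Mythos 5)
Your proof is correct and follows exactly the same route as the paper's: lift $\sigma$ to the polynomial ring via its universal property, check that each generator of $I_B$ dies because $\sigma$ is a Boolean homomorphism, descend to the quotient, and get uniqueness from the fact that the $y_e$ generate $R[B]$ as an $R$-algebra. No gaps.
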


\begin{proof}
There is an $R$-algebra homomorphism $\gamma : R[\{ x_e : e \in B \}] \to S$ such that $\gamma(x_e) = \sigma(e)$ for each $e \in B$. Since $\sigma$ is a Boolean homomorphism, each generator of $I_B$ lies in the kernel of $\gamma$. Therefore, we get an induced $R$-algebra homomorphism $\alpha : R[B] \to S$ with $\alpha(x_e + I_B) = \sigma(e)$. Thus, $\alpha \circ i_B = \sigma$. Clearly $\alpha$ is the unique $R$-algebra homomorphism satisfying this equation since $R[B]$ is generated by the $y_e$.
\end{proof}

\begin{lemma}\label{lem:faithful}
Let $B$ be a Boolean algebra.
\begin{enumerate}
\item If $e \in B$ is nonzero, then $y_e \in R[B]$ is faithful.
\item $i_B$ is a Boolean isomorphism from $B$ onto a faithful generating algebra of idempotents $\{y_e:e\in B\}$ of $\Id(R[B])$.
\end{enumerate}
\end{lemma}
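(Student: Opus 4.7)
The plan is to establish (1) directly via the universal mapping property (Lemma~\ref{UMP}) and then deduce (2) from (1) combined with facts already recorded in the text.

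For (1), fix a nonzero $e \in B$. The strategy is to produce an $R$-algebra homomorphism $\alpha : R[B] \to R$ with $\alpha(y_e) = 1$; once this is in hand, $ay_e = 0$ in $R[B]$ immediately forces $a = \alpha(ay_e) = 0$. To produce $\alpha$, I would use the Boolean prime ideal theorem to choose an ultrafilter $\mathcal{U}$ of $B$ with $e \in \mathcal{U}$. The associated characteristic map $\tau : B \to \{0,1\}$ is a Boolean homomorphism with $\tau(e) = 1$. Composing with the inclusion $\{0,1\} \subseteq \Id(R)$ gives a Boolean homomorphism $\sigma : B \to \Id(R)$, and Lemma~\ref{UMP} lifts $\sigma$ to the desired $R$-algebra homomorphism $\alpha : R[B] \to R$, which in particular satisfies $\alpha(y_e) = \sigma(e) = 1$.

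For (2), the paragraph preceding Lemma~\ref{UMP} already records that $i_B : B \to \Id(R[B])$ is a well-defined Boolean homomorphism and that its image $\{y_e : e \in B\}$ generates $R[B]$ as an $R$-algebra (hence as an $R$-module, by the discussion in the idempotent-generated paragraph). Injectivity of $i_B$ follows from (1): for each nonzero $e \in B$ the idempotent $y_e$ is faithful, and hence in particular nonzero (since $1 \cdot y_e = 0$ would force $1 = 0$ in $R$). Thus $i_B$ is a Boolean isomorphism onto the Boolean subalgebra $\{y_e : e \in B\}$ of $\Id(R[B])$, and since each nonzero member of this subalgebra is faithful by (1), it constitutes a faithful generating algebra of idempotents of $R[B]$.

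The main obstacle, such as it is, lies in (1): we need \emph{some} quotient of $R[B]$ in which $y_e$ has nonzero image, and it is not transparent how to construct one directly from the defining relations of $R[B]$. The trick is to invoke the universal property and let a Boolean point-separation argument --- namely, the Boolean prime ideal theorem furnishing an ultrafilter through $e$ --- do the work instead. Once one thinks to use the two-element Boolean algebra as the target, the rest is mechanical.
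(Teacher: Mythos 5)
Your proof is correct and takes essentially the same route as the paper's: for (1) both lift a Boolean homomorphism $B\to{\bf 2}\subseteq\Id(R)$ separating $e$ from $0$ (you just make the ultrafilter explicit) to an $R$-algebra homomorphism $R[B]\to R$ sending $y_e$ to $1$ via Lemma~\ref{UMP}, and for (2) both deduce injectivity of $i_B$ from the fact that nonzero elements of $B$ have nonzero (indeed faithful) images.
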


\begin{proof}
(1) Suppose $e \ne 0$. Then there is a Boolean homomorphism $\sigma$ from $B$ onto the two-element Boolean algebra {\bf 2} with $\sigma(e)=1$. Viewing {\bf 2} as a subalgebra of $\Id(R)$, we may view $\sigma$ as a Boolean homomorphism from $B$ to $\Id(R)$. Then, by Lemma~\ref{UMP}, there is an $R$-algebra homomorphism $\alpha : R[B] \to R$, which sends $y_e$ to $\sigma(e) = 1$. Consequently, if $ay_e = 0$, then $0=\alpha(ay_e) = a$. This shows that $y_e$ is faithful.

(2) It is obvious that $\{y_e:e\in B\}$ is a generating algebra of idempotents of $\Id(R[B])$ and that $i_B:B\to\{y_e:e\in B\}$ is an onto Boolean homomorphism. That $\{y_e:e\in B\}$ is faithful and so $i_B$ is 1-1 follows from (1).
\end{proof}

We are ready to prove the main result of this section, which gives several characterizations of Specker $R$-algebras, one of which is as Boolean powers of $R$.

\begin{theorem}\label{new def}
Let $S$ be a commutative $R$-algebra. The following are equivalent.
\begin{enumerate}
\item $S$ is a Specker $R$-algebra.
\item $S$ is isomorphic to $R[B]$ for some Boolean algebra $B$.
\item $S$ is isomorphic to a Boolean power of $R$.
\item  There is a Boolean subalgebra $B$ of $\Id(S)$ such that $S$ is generated by $B$ and every Boolean homomorphism $B\to{\bf 2}$ lifts to an $R$-algebra homomorphism $S\to R$.
\end{enumerate}
\end{theorem}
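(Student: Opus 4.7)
The plan is to establish $(1) \Leftrightarrow (2) \Leftrightarrow (3)$ together with $(1) \Leftrightarrow (4)$, with Lemma~\ref{UMP} as the central tool. For $(1) \Rightarrow (2)$, let $B$ be a faithful generating algebra of idempotents of $S$ and apply Lemma~\ref{UMP} to the inclusion $\sigma : B \hookrightarrow \Id(S)$ to obtain an $R$-algebra homomorphism $\alpha : R[B] \to S$ with $\alpha(y_e) = e$. Since $B$ generates $S$, $\alpha$ is surjective. For injectivity, Lemma~\ref{lem:faithful} gives that $\{y_e : e \in B\}$ is a faithful generating algebra of $R[B]$, so Lemma~\ref{lem:2.1} produces for each $s \in R[B]$ an orthogonal decomposition $s = \sum a_i y_{e_i}$ with the $e_i \in B$ orthogonal; if $\alpha(s) = \sum a_i e_i = 0$, then faithfulness of the $e_i$ in $S$ together with Remark~\ref{uniqueness}(2) forces every $a_i = 0$, whence $s = 0$. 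The direction $(2) \Rightarrow (1)$ is immediate from Lemma~\ref{lem:faithful}(2).

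For $(2) \Leftrightarrow (3)$, I would first prove $(3) \Rightarrow (1)$ by observing that for the Stone space $X$ of a Boolean algebra, each continuous $f : X \to R_{\func{disc}}$ has finite image by compactness and hence equals $\sum a_i \chi_{U_i}$ for disjoint clopens $U_i \subseteq X$, so the characteristic functions of clopens form a generating Boolean subalgebra of $C(X,R_{\func{disc}})$; it is faithful because evaluating $a\chi_U = 0$ at any point of a nonempty clopen $U$ forces $a = 0$. Combined with $(1) \Rightarrow (2)$ this yields $(3) \Rightarrow (2)$. For $(2) \Rightarrow (3)$, given the Stone space $X$ of $B$, the assignment $e \mapsto \chi_{U_e}$ is a Boolean homomorphism $B \to \Id(C(X,R_{\func{disc}}))$, and Lemma~\ref{UMP} produces an $R$-algebra homomorphism $R[B] \to C(X,R_{\func{disc}})$ that is bijective by the same orthogonal-decomposition argument as in $(1) \Rightarrow (2)$, since the two faithful generating Boolean subalgebras correspond under it.

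For $(1) \Leftrightarrow (4)$, the direction $(1) \Rightarrow (4)$ follows by viewing ${\bf 2} \hookrightarrow \Id(R)$ and applying Lemma~\ref{UMP} to any Boolean homomorphism $\sigma : B \to {\bf 2}$ to obtain an $R$-algebra map $R[B] \to R$, which transports along the isomorphism $R[B] \cong S$ from $(1) \Leftrightarrow (2)$ to produce the desired lift $S \to R$. For $(4) \Rightarrow (1)$, suppose $e \in B$ is nonzero and $ae = 0$ in $S$ for some $a \in R$. An ultrafilter of $B$ containing $e$ yields a Boolean homomorphism $\sigma : B \to {\bf 2}$ with $\sigma(e) = 1$; lifting to $\alpha : S \to R$ then gives $0 = \alpha(ae) = a\alpha(e) = a$, so every nonzero element of $B$ is faithful.

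The main obstacle will be the bookkeeping in $(1) \Rightarrow (2)$ and $(2) \Rightarrow (3)$: namely, confirming that under the $R$-algebra map supplied by Lemma~\ref{UMP} the distinguished faithful generating Boolean subalgebras on the two sides correspond, so that injectivity reduces cleanly to uniqueness of orthogonal decompositions via Remark~\ref{uniqueness}(2). Once that correspondence is in place, the remaining implications are direct applications of the universal property together with the Stone-duality-style analysis of $C(X,R_{\func{disc}})$.
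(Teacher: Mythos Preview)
Your proposal is correct and follows essentially the same approach as the paper: both arguments hinge on Lemma~\ref{UMP}, the faithfulness established in Lemma~\ref{lem:faithful}, and the orthogonal-decomposition machinery of Lemma~\ref{lem:2.1}/Remark~\ref{uniqueness}. The only differences are organizational---the paper runs the cycle $(1)\Rightarrow(2)\Rightarrow(3)\Rightarrow(1)$ and separately $(2)\Rightarrow(4)\Rightarrow(1)$, while you pair off $(1)\Leftrightarrow(2)$, $(2)\Leftrightarrow(3)$, $(1)\Leftrightarrow(4)$---and in $(2)\Rightarrow(3)$ the paper spells out injectivity and surjectivity of $R[B]\to C(X,R_{\func{disc}})$ explicitly rather than invoking the abstract correspondence of faithful generating subalgebras, but the underlying mechanism is identical.
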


\begin{proof}
(1) $\Rightarrow$ (2): Let $B$ be a faithful generating algebra of idempotents of $S$. By Lemma~\ref{UMP}, the identity map $B\rightarrow  B$ lifts to an $R$-algebra homomorphism $\alpha:R[B] \rightarrow S$. By assumption, $B$ generates $S$, so $\alpha$ is onto. To see that $\alpha$ is 1-1, suppose that $s \in R[B]$ with $\alpha(s) = 0$. Since $R[B]$ is idempotent generated, by Lemma~\ref{lem:2.1}, we may write $s = \sum a_i y_{e_i}$, where the $a_i \in R$ are distinct and the $e_i\in B$ are orthogonal. Therefore, $0 = \alpha(s) = \sum a_i e_i$. Multiplying by $e_i$ gives $a_ie_i = 0$, which since the nonzero idempotents in $B$ are faithful implies that $a_i = 0$. This yields $s = 0$; hence, $\alpha$ is an isomorphism.

(2) $\Rightarrow$ (3): We show that  $R[B]$ is isomorphic to $C(X,R_{\func{disc}})$, where $X$ is the Stone space of $B$. By Stone duality, we identify $B$ with the Boolean algebra of clopen subsets of $X$. For $e$ a clopen subset of $X$, let $\chi_e$ be the characteristic function of $e$, and define $\sigma : B \to C(X, R_{\func{disc}})$ by $e \mapsto \chi_e$. It is easy to see that this is a Boolean homomorphism from $B$ to the idempotents of $C(X, R_{\func{disc}})$. Thus, by Lemma~\ref{UMP}, there is an $R$-algebra homomorphism $\alpha : R[B] \to C(X, R_{\func{disc}})$ which sends $y_e$ to $\sigma(e)$ for each $e \in B$. By Lemma~\ref{lem:2.1}, each $s \in R[B]$ may be written in the form $s = \sum a_i y_{e_i}$ with the $a_i\in R$ distinct and the $e_i\in B$ orthogonal. Then $\alpha(s)$ is the continuous function $X \to R$ such that
\[
\alpha(s)(x) =
\left\{\begin{array}{ll}
a_i & \textrm{if } x \in e_i, \\
0 & \textrm{otherwise.} \\
\end{array}\right.
\]
If $s \ne 0$, then there is $i$ with $e_i \ne \varnothing$ and $a_i \ne 0$. Therefore, $\alpha(s) \ne 0$ in $C(X, R_{\func{disc}})$. Thus, $\alpha$ is 1-1. To see $\alpha$ is onto, let $f \in C(X, R_{\func{disc}})$. For each $a \in R$, we see that $f^{-1}(a)$ is clopen in $X$, and the various $f^{-1}(a)$ cover $X$. By compactness, there are finitely many distinct $a_i$ such that $X = f^{-1}(a_1) \cup \cdots \cup f^{-1}(a_n)$. If $e_i = f^{-1}(a_i)$, then $f = \sum a_i \chi_{e_i}$, so $f = \alpha\left(\sum a_i y_{e_i}\right)$. Thus, $\alpha$ is onto. Consequently, $\alpha$ is an $R$-algebra isomorphism between $R[B]$ and $C(X,R_{\func{disc}})$.

(3) $\Rightarrow$ (1): Let $X$ be a Stone space and set $S = C(X,R_{\func{disc}})$. For each clopen subset $U$ of $X$, the characteristic function $\chi_U$ of $U$ is an idempotent of $S$. Let
\[
B = \{\chi_U : U \textrm{ is clopen in }X\}.
\]
Then $B$ is a Boolean subalgebra of $\Id(S)$. Moreover, each nonzero $\chi_U \in B$ is faithful, since if $a \in R$ with $a\chi_U = 0$, then $a\chi_U(x) = 0$ for all $x \in X$. As $\chi_U$ is nonzero, $U$ is nonempty. Let $x \in U$. Then $0 = a\chi_U(x) = a$. Thus, $\chi_U$ is faithful. Finally, we show that $B$ generates $S$. Take $s \in S$. For each $a \in R$ the pullback $s^{-1}(a)$ is a clopen subset of $X$. Moreover, $X$ is covered by the various $s^{-1}(a)$. Since $X$ is compact, there are distinct $a_1,\dots, a_n \in R$ with $X = s^{-1}(a_1) \cup \cdots \cup s^{-1}(a_n)$. If $U_i = s^{-1}(a_i)$, then $s = \sum a_i \chi_{U_i}$. Thus, $B$ generates $S$. Consequently, $S$ is a Specker $R$-algebra.

%(3) $\Rightarrow$ (1): Let $X$ be the Stone space of $B$ and let $S=C(X,R_{\func{disc}})$. Since $X$ is compact, every function in $C(X,R_{\func{disc}})$ is finitely-valued and hence an $R$-linear combination of characteristic functions of clopens of $X$. These characteristic functions correspond to the elements of $B$, and since nonzero characteristic functions are faithful, this verifies (1).
%
(2) $\Rightarrow$ (4): Suppose that $S \cong R[C]$ for some Boolean algebra $C$. Let $B = \{y_c : c \in C\}$. By Lemma~\ref{lem:faithful}, $B$ is isomorphic to $C$, so $R[B]$ is isomorphic to $R[C]$. We identify $S$ with $R[B]$. Let $\sigma : B \to {\bf 2}$ be a Boolean homomorphism. By viewing ${\bf 2}$ as a Boolean subalgebra of $\Id(R)$, we may view $\sigma$ as a Boolean homomorphism from $B$ to $\Id(R)$. Then Lemma~\ref{UMP} yields an $R$-algebra homomorphism $S \to R$ lifting $\sigma$.

(4) $\Rightarrow$ (1): It suffices to show that every nonzero idempotent in $B$ is faithful. Let $0 \ne e \in B$, and let $a \in R$ with $ae = 0$. Since $0\ne e$, there is a Boolean homomorphism $\sigma:B \rightarrow  {\bf 2}$ such that $\sigma(e) = 1$. By (4), $\sigma$ lifts to an $R$-algebra homomorphism $\alpha:S \rightarrow  R$. Thus, $0 = \alpha(ae) = a\sigma(e) = a$, so that $e$ is faithful.
\end{proof}

\begin{remark}\label{unique B}
\begin{enumerate}
\item[]
\item Let $S$ be a Specker $R$-algebra. We will see in Section~\ref{sec:3} that a faithful generating algebra of idempotents of $S$ need not be unique, but that it is unique up to isomorphism.
\item The proof of (1) $\Rightarrow$ (2) of Theorem~\ref{new def} shows that if $B$ is a faithful generating algebra of idempotents of $S$, then $S \cong R[B]$. We will make use of this fact later on.
%Moreover, we will show that $\{y_e : e \in B\}$ is faithful in Lemma~\ref{lem:3.2}. It is clear that it generates $R[B]$.
%\item In Theorem~\ref{new def}(4), the requirement that $S$ is generated by $B$ is not redundant. For, let $R$ be a reduced ring (i.e., $R$ has no nonzero nilpotent elements) and let $S = R[x]/(x^2)$. Then each Boolean algebra homomorphism ${\bf 2} \rightarrow {\bf 2}$  lifts uniquely to an $R$-algebra homomorphism $S\rightarrow R$ since the only $R$-algebra homomorphism from $S$ to $R$ sends the coset of $x$ to $0$, as $R$ is reduced (has no nonzero nilpotents). Now, we see that $\Id(S) = \{0,1\}$, which then shows that $S$ is not generated over $R$ by idempotents. To see this, by the definition of $S$, each element of $S$ can be written uniquely as the coset of some linear polynomial $a+bx$ for $a,b \in R$. If $s = a+bx+(x^2)$ is idempotent, then $s^2 = s$ yields $a^2 + 2abx + (x^2) = a+bx+(x^2$. Uniqueness then yields $a^2 = a$ and $2ab=b$. Thus, $a$ is an idempotent and $(2a-1)b = 0$. Since $2a-1$ is a unit if $a$ is an idempotent (since $(2a-1)(2a+1)=-1$), this forces $b = 0$. Thus, $s = a+(x^2)$, which means $s$ is in the image of the map $R \to S$. Therefore, $S$ is not generated over $R$ by idempotents.
\item In Theorem~\ref{new def}.4, the requirement that $S$ is generated by $B$ is not redundant. For, let $R$ be an integral domain and let $S = R[x]/(x^2)$. As $R$ has no zero divisors, the only $R$-algebra homomorphism from $S$ to $R$ sends the coset of $x$ to $0$. Therefore, each Boolean homomorphism ${\bf 2} \rightarrow {\bf 2}$ lifts uniquely to an $R$-algebra homomorphism  $S\rightarrow  R$. By the definition of $S$, each element of $S$ can be written uniquely as the coset of some linear polynomial $a+bx$ for $a,b \in R$. If $s = a+bx+(x^2)$ is idempotent, then $s^2 = s$ yields $a^2 + 2abx + (x^2) = a+bx+(x^2)$. Uniqueness then yields $a^2 = a$ and $2ab=b$. Therefore, $a\in\Id(R)$, and as $R$ is a domain, this forces $a=0,1$. If $a=1$, then $b=0$. Thus, $s \in \{0+(x^2),1+(x^2)\}$, and so $\Id(S) = \{0,1\}$. It follows that $S$ is not generated over $R$ by idempotents.
\end{enumerate}
\end{remark}

\begin{remark}\label{Foster remark}
While in this article we focus on viewing Boolean powers as $C(X,R_{\func{disc}})$, Foster's original conception of a Boolean power \cite{Fos53a,Fos53b,Fos61} also has an interesting interpretation in our setting. Let $R$ be a commutative ring and let $B$ be a Boolean algebra. Consider the set $R[B]^\perp$ of all functions $f:R \rightarrow B$ such that
\begin{enumerate}
\item $f(a) = 0$ for all but finitely many $a \in R$.
\item $f(a) \wedge f(b) = 0$ for all $a \ne b$ in $R$.
\item $\bigvee {\func{Im}} f = 1$.
\end{enumerate}
Then $R[B]^\perp$ has an $R$-algebra structure given by
\begin{enumerate}
\item[(4)] $(f + g)(a) = \displaystyle{\bigvee} \{{f}(b) \wedge {g}(c): {b+c=a}\}$.
\item[(5)] ${(fg)}(a) = \displaystyle{\bigvee} \{{f}(b) \wedge {g}(c):{bc=a}\}$.
\item[(6)] ${(bf)}(a) = \bigvee\{f(c):bc=a\}$.
\end{enumerate}
As noticed by J$\acute{\mathrm{o}}$nsson in the review of \cite{Fos61}, and further elaborated by Banaschewski and Nelson \cite{BN80}, $R[B]^\perp$ is isomorphic to the Boolean power $C(X,R_{\func{disc}})$, where $X$ is the Stone space of $B$.

As the notation $(-)^\perp$ suggests, $R[B]^\perp$ encodes full orthogonal decompositions of elements of $R[B]$ into an algebra of functions from $R$ to $B$. Indeed, for a Specker $R$-algebra $S$ with a faithful generating algebra of idempotents $B$, define $(-)^\perp:S\to R[B]^\perp$ as follows. For $s\in S$, write $s=\sum_{i=1}^n a_ie_i$ in full orthogonal form, and define $s^\perp:R \rightarrow B$ by
\[
s^\perp(a) =
\left\{\begin{array}{ll}
e_i & \textrm{if } a=a_i \textrm{ for some } i, \\
0 & \textrm{otherwise.} \\
\end{array}\right.
\]
As we show in \cite{BMMO13b}, $s^\perp\in R[B]^\perp$ and $s = \sum_{a\in R} a s^\perp(a)$. Moreover, if $f \in R[B]^\perp$, then $s = \sum_{a \in R} af(a) \in S$ and $f = s^\perp $. Furthermore, the map $(-)^\perp:S\to R[B]^\perp$ is an $R$-algebra isomorphism. Thus, the interpretation of a Specker $R$-algebra $S$ in terms of $R[B]^\perp$ is convenient for applications, where the decomposition data for elements in $S$ needs to be tracked under the algebraic operations of $S$. This viewpoint plays an important role in \cite{BMMO13b}.
\end{remark}

\section{Specker algebras over an indecomposable ring}\label{sec:3}

In this section we show that when $R$ is an indecomposable ring (that is, $\Id(R)=\{0,1\}$), then the results of the previous section can be strengthened considerably. Namely, we show that for an indecomposable $R$, the category ${\bf Sp}_R$ of Specker $R$-algebras is equivalent to the category {\bf BA} of Boolean algebras, and hence, by Stone duality, is dually equivalent to the category {\bf Stone} of Stone spaces (zero-dimensional compact Hausdorff spaces). We also show that for an indecomposable $R$, Specker $R$-algebras are exactly the idempotent generated $R$-algebras which are projective as an $R$-module.

We start by pointing out that for an indecomposable $R$, the representation of Theorem~\ref{new def} of Specker $R$-algebras as Boolean powers of $R$ yields another representation of Specker $R$-algebras as idempotent generated subalgebras of $R^I$ for some set $I$.

\begin{proposition}\label{prop:2.6} \label{prop:2.4}
Let $R$ be indecomposable. A commutative $R$-algebra $S$ is a Specker $R$-algebra iff $S$ is isomorphic to an idempotent generated $R$-subalgebra of $R^I$ for some set $I$.
\end{proposition}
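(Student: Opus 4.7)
The plan is to prove the two directions separately, leveraging Theorem~\ref{new def} for the forward direction and exploiting the indecomposability of $R$ for the reverse direction. The representation of idempotents in $R^I$ when $R$ is indecomposable is the only nontrivial ingredient.

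For the forward direction, suppose $S$ is a Specker $R$-algebra. By Theorem~\ref{new def}, $S \cong C(X, R_{\func{disc}})$ for $X$ the Stone space of some Boolean algebra. Take $I = X$ and consider the evaluation map $C(X, R_{\func{disc}}) \to R^X$ defined by $f \mapsto (f(x))_{x \in X}$. This is an injective $R$-algebra homomorphism, and since $C(X, R_{\func{disc}})$ is idempotent generated (characteristic functions of clopens generate it, as shown in the proof of Theorem~\ref{new def}), so is its image. Thus $S$ is isomorphic to an idempotent generated $R$-subalgebra of $R^I$.

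For the reverse direction, suppose $S$ is isomorphic to an idempotent generated $R$-subalgebra of $R^I$ for some set $I$. The key observation is that because $\Id(R) = \{0,1\}$, an idempotent of $R^I$ is a tuple $(e_i)_{i \in I}$ with each $e_i \in \{0,1\}$, that is, the characteristic function of some subset of $I$. If such an idempotent $e$ is nonzero, then $e_{i_0} = 1$ for some $i_0 \in I$, and for $a \in R$ with $ae = 0$, looking at the $i_0$-coordinate yields $a = 0$. Hence every nonzero idempotent of $R^I$, and in particular every nonzero idempotent of $S$, is faithful. Since $S$ is idempotent generated, $\Id(S)$ is a faithful generating algebra of idempotents of $S$, so $S$ is a Specker $R$-algebra.

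There is no real obstacle here; the content of the proposition is essentially packaged in Theorem~\ref{new def} together with the elementary description of idempotents in $R^I$. The only point that deserves care is that in the forward direction one must exhibit the embedding so that its image is visibly idempotent generated as an $R$-subalgebra, which is immediate from the pointwise evaluation embedding.
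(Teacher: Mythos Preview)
Your proof is correct and follows essentially the same approach as the paper: the forward direction uses Theorem~\ref{new def} to embed $S$ as $C(X,R_{\func{disc}})\subseteq R^X$, and the reverse direction observes that indecomposability of $R$ forces idempotents of $R^I$ to be characteristic functions, hence faithful. Your write-up is slightly more explicit about the evaluation embedding and the coordinate argument for faithfulness, but the ideas are identical.
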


\begin{proof}
Let $S$ be a Specker $R$-algebra. By Theorem~\ref{new def}, $S \cong C(X,R_{\func{disc}})$ for some Stone space $X$, so $S$ is isomorphic to an idempotent generated subalgebra of $R^X$. Conversely, suppose that $S$ is an idempotent generated subalgebra of $R^I$ for some set $I$. Since $R$ is indecomposable, it is easy to see that $\Id(R^I) = \{ f \in R^I : f(i) \in \{0,1\} \ \forall i \in I\}$. From this description it is clear that each nonzero idempotent of $R^I$ is faithful. Therefore, $S$ has a faithful generating algebra of idempotents, hence is a Specker $R$-algebra.
\end{proof}

In the next lemma we characterize the idempotents of $R[B]$. For this, we view the Boolean algebra $\Id(R)$ as a Boolean ring. Then $\Id(R)[B]$ is an $\Id(R)$-algebra, which is a Boolean ring, and hence may be viewed as a Boolean algebra.
%Lemma~\ref{lem:3.2} below implies that $\Id(R)[B]$ is a Boolean ring, and hence may be viewed as a Boolean algebra.

\begin{lemma}\label{lem:3.2}
Let $B$ be a Boolean algebra.
\begin{enumerate}
\item $s\in\func{Id}(R[B])$ iff $s=\sum a_i y_{e_i}$ with the $a_i \in \Id(R)$ distinct and the $e_i \in B$ a full orthogonal set.
\item $\Id(R[B]) \cong \Id(R)[B]$ as Boolean algebras.
\item $\Id(R[B])$ is isomorphic to the coproduct of $\Id(R)$ and $B$.
\item If $R$ is indecomposable, then $i_B:B\to\func{Id}(R[B])$ is a Boolean isomorphism.
\end{enumerate}
\end{lemma}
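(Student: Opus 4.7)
For (1), note that $\{y_e : e \in B\}$ is a faithful generating algebra of idempotents by Lemma~\ref{lem:faithful}, so Lemma~\ref{lem:2.1} applies: each $s \in R[B]$ has a unique full orthogonal form $s = \sum a_i y_{e_i}$ with the $a_i \in R$ distinct. Orthogonality of the $e_i$ gives $s^2 = \sum a_i^2 y_{e_i}$. The equation $s^2 = s$ then yields $\sum a_i^2 y_{e_i} = \sum a_i y_{e_i}$, and Remark~\ref{uniqueness}(2) forces $a_i^2 = a_i$ for each $i$, that is, $a_i \in \Id(R)$. The converse is immediate from $a_i^2 = a_i$ together with orthogonality.

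For (2), I would first apply (1) to the Specker $\Id(R)$-algebra $\Id(R)[B]$ (with $\Id(R)$ viewed as a Boolean ring): since $\Id(\Id(R)) = \Id(R)$, every element of $\Id(R)[B]$ turns out to be idempotent, so $\Id(R)[B]$ is itself a Boolean algebra. To construct the isomorphism $\phi : \Id(R)[B] \to \Id(R[B])$, observe that the inclusion $\Id(R) \hookrightarrow \Id(R[B])$ is a Boolean homomorphism (the operations $\wedge$, $\vee$, $\lnot$ are defined by the same ring-theoretic formulas in $R$ and in $R[B]$, and $R \subseteq R[B]$ is a ring inclusion), so $\Id(R[B])$ acquires an $\Id(R)$-algebra structure. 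Applying Lemma~\ref{UMP} to the Boolean homomorphism $i_B : B \to \Id(R[B])$ then yields a unique $\Id(R)$-algebra (hence Boolean) homomorphism $\phi$ with $\phi(y_e) = y_e$. Surjectivity of $\phi$ is exactly what (1) gives; injectivity follows because if $\phi$ kills $s = \sum a_i y_{e_i}$ written in orthogonal form in $\Id(R)[B]$, then the same sum vanishes in $R[B]$, and multiplying by $y_{e_j}$ together with the faithfulness from Lemma~\ref{lem:faithful} forces every $a_j = 0$.

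For (3), I would verify that $\Id(R)[B]$ satisfies the universal property of the Boolean coproduct $\Id(R) \sqcup B$: given Boolean homomorphisms $f : \Id(R) \to C$ and $g : B \to C$, the map $f$ equips $C$ with an $\Id(R)$-algebra structure, and Lemma~\ref{UMP} lifts $g : B \to \Id(C) = C$ to a unique $\Id(R)$-algebra (hence Boolean) homomorphism $\Id(R)[B] \to C$ extending $g$. Combined with (2), this gives (3). For (4), indecomposability means $\Id(R) = {\bf 2}$, the initial object of ${\bf BA}$, so by (3), $\Id(R[B]) \cong {\bf 2} \sqcup B \cong B$, and one checks directly that this isomorphism is $i_B$.

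The main obstacle is the set-up in (2): equipping $\Id(R[B])$ with its $\Id(R)$-algebra structure requires care because $\Id(R)$ is not a subring of $R$ in the usual ring-theoretic sense (its Boolean-ring addition is symmetric difference rather than the addition of $R$), yet the natural inclusion into $\Id(R[B])$ does preserve the Boolean-ring operations. Once this is pinned down and (1) is in hand, both the surjectivity and the injectivity of $\phi$ follow cleanly, and (3) and (4) drop out from universal-property considerations.
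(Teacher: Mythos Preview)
Your proposal is correct and follows essentially the same line as the paper's proof: part (1) is identical, and parts (2)--(3) are handled via the same $\Id(R)$-algebra structure on $\Id(R[B])$ together with Lemma~\ref{UMP}, with your concern about symmetric difference versus ring addition being exactly the point the paper dispatches by observing that the inclusion $\Id(R)\to\Id(R[B])$ is a Boolean (hence Boolean-ring) homomorphism. The only divergence is in (4): you deduce it from (3) via $\Id(R)=\mathbf{2}$ being initial in $\mathbf{BA}$, whereas the paper reads it off directly from (1) (distinct coefficients in $\{0,1\}$ force $s=y_e$ for some $e$, so $\Id(R[B])=\{y_e:e\in B\}$, and Lemma~\ref{lem:faithful} gives injectivity of $i_B$); the paper's route is marginally more direct, but both are short and valid.
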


\begin{proof}
(1) Let $s = \sum a_i y_{e_i}$ with $a_i \in \Id(R)$ and the $e_i \in B$ orthogonal. Then $y_{e_i}y_{e_j} = 0$ for $i\ne j$, and so $s^2 = \sum a_i^2 (y_{e_i})^2 =\sum a_i y_{e_i} = s$. Thus, $s$ is idempotent. For the converse, let $s \in R[B]$ be idempotent. By Lemma~\ref{lem:faithful}, the $y_e$ form a faithful generating algebra of idempotents of $R[B]$, thus by Lemma~\ref{lem:2.1}, we may write $s = \sum a_i y_{e_i}$ with the $a_i \in R$ distinct and the $e_i\in B$ a full orthogonal set. If $i\ne j$, then $y_{e_i}y_{e_j} = y_{e_i \wedge e_j} = y_0 = 0$. Therefore, $s^2 = \sum a_i^2 y_{e_i}$. By Remark~\ref{uniqueness}.2, the equation $s^2 = s$ implies $a_i^2 = a_i$ for each $i$, so $a_i \in \Id(R)$.

(2) The inclusion map $\iota : \Id(R) \to \Id(R[B])$ is a Boolean homomorphism, and so is a ring homomorphism of Boolean rings. Viewing $\Id(R[B])$ as an $\Id(R)$-algebra, the Boolean homomorphism $i_B : B \to \Id(R[B])$ sending $e$ to $y_e$ extends by Lemma~\ref{UMP} to an $\Id(R)$-algebra homomorphism $\alpha : \Id(R)[B] \to \Id(R[B])$. By (1), Lemma~\ref{lem:faithful}, and Lemma~\ref{lem:2.1}, $\alpha$ is an isomorphism of Boolean rings, hence an isomorphism of Boolean algebras.

(3) Let $C$ be a Boolean algebra and suppose $\sigma : \Id(R) \to C$ and $\rho : B \to C$ are Boolean homomorphisms. Then $C$, viewed as a Boolean ring, is an $\Id(R)$-algebra, and Lemma~\ref{UMP} yields an $\Id(R)$-algebra homomorphism $\eta : \Id(R)[B] \to C$, sending $ay_e$ to $\sigma(a)\rho(e) = \sigma(a) \wedge \rho(e)$.
%Then $\eta$ is a Boolean homomorphism, and
Clearly $\eta$ is a unique Boolean homomorphism extending $\iota$ and $i_B$. Thus, by (2), $\Id(R[B]) \cong \Id(R)[B]$ is the coproduct of $\Id(R)$ and $B$.
\[
\xymatrix{
\Id(R) \ar@/_/[rdd]_{\sigma} \ar[rd]^{\iota} && B \ar@/^/[ldd]^{\rho} \ar[ld]_{i_B} \\
& \Id(R)[B] \ar@{-->}[d]^{\eta} & \\
& C}
\]

(4) Since $R$ is indecomposable, it follows from (1) that $\Id(R[B])=\{y_e:e\in B\}$. Now apply Lemma~\ref{lem:faithful}.
\end{proof}

\begin{remark}
As follows from the proofs of Lemma~\ref{lem:3.2} and Theorem~\ref{new def}, for Boolean algebras $B$ and $C$, the coproduct of $B$ and $C$ in {\bf BA} can be described as the Boolean power of $B$ by $C$. In particular, we see that $B[C] \cong C[B]$. This isomorphism is contained in \cite[Exercise IV.5.1]{BS81}.
\end{remark}

%\begin{lemma}\label{lem:3.3}
%If $R$ is not indecomposable, then $\mathcal I,\mathcal S$ do not form an equivalence between ${\bf Sp}_R$ and {\bf BA}.
%\end{lemma}
%
%\begin{proof}
%For a Boolean algebra $B$, we have $\mathcal{I}(\mathcal{S}(B))=\Id(R[B])$. By Lemma~\ref{lem:3.2}.3, $\Id(R[B])$ is isomorphic to the coproduct of $\Id(R)$ and $B$. Since $R$ is not indecomposable, $|\Id(R)| > 2$. Let also $|B|>2$. Then $B$ is not isomorphic to $\Id(R[B])$. {\bf Justify!} Thus, $\mathcal I,\mathcal S$ do not form an equivalence between ${\bf Sp}_R$ and ${\bf BA}$.
%\end{proof}

%Next we show that a faithful generating algebra of idempotents of a Specker $R$-algebra is unique up to isomorphism. For this we need the following lemma.

As promised in Remark~\ref{unique B}, we next show that a faithful generating algebra of idempotents of a Specker $R$-algebra is not unique.

\begin{example}\label{example}
Suppose that $R$ is not an indecomposable ring and let $B = \{0,1,e,\lnot e\}$ be the four-element Boolean algebra. By Lemma~\ref{lem:faithful}, $\{y_b : b \in B\}$ is a faithful generating algebra of idempotents of $R[B]$. Let $a \in R$ be an idempotent with $a \ne 0,1$ and set $g = ay_e + (1-a)y_{\lnot e}$. By Lemma~\ref{lem:3.2}.1, $g$ is an idempotent in $R[B]$. Also, $1-g = (1-a)y_e + ay_{\lnot e}$. To see that $g$ is faithful, suppose that $c \in R$ with $cg = 0$. Then $c(1-a)y_e + cay_{\lnot e} = 0$. By Remark~\ref{uniqueness}.2, $c(1-a) = ca = 0$, which forces $c = 0$. A similar argument shows $1-g$ is faithful. Let $C = \{0,1,g,1-g\}$. Since $y_e = ag + (1-a)(1-g)$, we see that $C$ is a faithful generating algebra of idempotents of $R[B]$ different than $\{y_b : b \in B\}$.\end{example}

On the other hand, we next prove that a faithful generating algebra of idempotents of a Specker $R$-algebra is unique up to isomorphism.

\begin{theorem}\label{prop:2.8}
Let $S$ be a Specker $R$-algebra. If $B$ and $C$ are both faithful generating algebras of idempotents of $S$, then $B$ is isomorphic to $C$.
\end{theorem}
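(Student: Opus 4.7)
The plan is to use Stone duality together with the coproduct description of $\Id(S)$ from Lemma~\ref{lem:3.2}. By Remark~\ref{unique B}(2), both hypotheses yield $R$-algebra isomorphisms $S\cong R[B]$ and $S\cong R[C]$, each sending $y_e$ to $e$. Applying $\Id(-)$ and invoking Lemma~\ref{lem:3.2}(3), we obtain two coproduct presentations in \textbf{BA}:
\[
\Id(S)\;\cong\;\Id(R)\amalg B\;\cong\;\Id(R)\amalg C.
\]
In both presentations the injection $\Id(R)\to\Id(S)$ is the canonical inclusion induced by the structure map $R\to S$, as one sees directly from the proof of Lemma~\ref{lem:3.2}(3), where this injection is simply the fixed inclusion $\iota:\Id(R)\hookrightarrow\Id(R[B])$, a map that does not involve $B$ (and correspondingly does not involve $C$).

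Next, I would pass to Stone duality. Writing $X_A$ for the Stone space of a Boolean algebra $A$, the two coproduct presentations above translate into homeomorphisms
\[
X_{\Id(S)}\;\cong\;X_{\Id(R)}\times X_B \qquad\text{and}\qquad X_{\Id(S)}\;\cong\;X_{\Id(R)}\times X_C,
\]
under both of which the continuous map $X_{\Id(S)}\to X_{\Id(R)}$ dual to the canonical inclusion becomes the first-coordinate projection. Composing the two homeomorphisms produces a homeomorphism $X_{\Id(R)}\times X_B\to X_{\Id(R)}\times X_C$ commuting with the first projections, and restricting it to the fiber over any point of $X_{\Id(R)}$ gives a homeomorphism $X_B\cong X_C$. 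A final application of Stone duality then yields $B\cong C$. (The degenerate case $R=0$ forces $S=0$ and $B=C$ to be trivial, so the claim is vacuous; whenever $R\ne 0$, $\Id(R)$ is nontrivial and $X_{\Id(R)}$ is nonempty, so a fiber point exists.)

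The principal step is the identification of $\Id(S)$ as a coproduct whose first factor $\Id(R)$ is fixed independently of the choice of faithful generating algebra of idempotents; once this is in place, the remainder is a routine exercise in Stone duality and topology. The only subtle point is verifying that the projection $X_{\Id(S)}\to X_{\Id(R)}$ is the same map in both product decompositions, which reduces to the observation that the inclusion $\Id(R)\hookrightarrow\Id(S)$ is intrinsic to the $R$-algebra structure of $S$ and is therefore independent of the choice of $B$ or $C$.
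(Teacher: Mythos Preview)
Your argument is correct, but it takes a genuinely different route from the paper's proof. The paper fixes a prime ideal $P$ of $R$, shows that $S/PS\cong (R/P)[B]$ (and likewise $\cong (R/P)[C]$), and then, since $R/P$ is indecomposable, invokes Lemma~\ref{lem:3.2}(4) to conclude $B\cong\Id(S/PS)\cong C$. In other words, the paper reduces to the indecomposable case by passing to a quotient of the coefficient ring and then reads off $B$ as the full idempotent algebra of the quotient.

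Your proof instead stays at the Boolean level: you use Lemma~\ref{lem:3.2}(3) to write $\Id(S)$ as a coproduct $\Id(R)\amalg B$ (and $\Id(R)\amalg C$) with the \emph{same} first injection, dualize to a product of Stone spaces with matching first projection, and recover $X_B$ (resp.\ $X_C$) as a fiber. This is clean and avoids any ring-theoretic quotient; the only delicate point, which you handle correctly, is that the map $\Id(R)\hookrightarrow\Id(S)$ is intrinsic to the $R$-algebra structure and hence is the same in both coproduct presentations. The two arguments are morally parallel: the paper picks a point of $\mathrm{Spec}\,R$ and passes to the fiber algebraically, while you pick a point of the Stone space of $\Id(R)$ and pass to the fiber topologically. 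Your approach makes the role of Lemma~\ref{lem:3.2}(3) more transparent; the paper's approach has the mild advantage of never leaving the category of rings.
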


\begin{proof}
We identify $S$ with $R[B]$. Let $P$ be a prime ideal of $R$ and let $PS$ be the ideal of $S$ generated by $P$. Thus, $PS$ consists of the sums of elements of the form $ps$ with $p \in P$ and $s \in S$. We show that $S/PS \cong (R/P)[B]$. We note that $(R/P)[B]$ is an $R$-algebra, where scalar multiplication is given by $a\cdot \left(\sum (b_i + P)y_{e_i}\right) = \sum (ab_i+P)y_{e_i}$ for $a,b_i \in R$ and $e_i \in B$. By Lemma~\ref{UMP}, the identity homomorphism $B \to B$ lifts to an $R$-algebra homomorphism $\alpha : R[B] \to (R/P)[B]$. It is clear that $\alpha$ is onto, and $\ker(\alpha)$ contains $PS$. If $s \in \ker(\alpha)$, write $s = \sum a_i y_{e_i}$ in the unique full orthogonal form. Then $0 = \alpha(s) = \sum (a_i + P)y_{e_i}$. By Remark~\ref{uniqueness}.2, each $a_i \in P$, so $s \in PS$. Therefore, $\ker(\alpha)=PS$, and so $S/PS \cong (R/P)[B]$. Now, since $R/P$ is a domain, it is indecomposable. Thus, by Lemma~\ref{lem:3.2}.4, $B \cong \Id((R/P)[B]) \cong \Id(S/PS)$. Applying the same argument for $C$, for any prime ideal $P$ of $R$, we then get $B \cong \Id(S/PS) \cong C$, so $B \cong C$.
\end{proof}

Example~\ref{example} and Theorem~\ref{prop:2.8} show that while a faithful generating algebra $B$ of idempotents of a Specker $R$-algebra may not be unique, it is unique up to isomorphism. In the following theorem we show that if $R$ is indecomposable, then a Specker $R$-algebra $S$ has a unique faithful generating algebra of idempotents, namely $\Id(S)$.

\begin{theorem}\label{faithful}
Let $R$ be indecomposable. An idempotent generated commutative $R$-algebra $S$ is a Specker $R$-algebra iff each nonzero idempotent in $\Id(S)$ is faithful. Consequently, if $S$ is a Specker $R$-algebra, then $\Id(S)$ is the unique faithful generating algebra of idempotents of $S$.
\end{theorem}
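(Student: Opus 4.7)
The plan is to reduce both implications to Lemma~\ref{lem:3.2}(4), which in the indecomposable case pins down the idempotents of $R[B]$ exactly.

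For the forward direction, suppose $S$ is a Specker $R$-algebra and let $B$ be a faithful generating algebra of idempotents of $S$. By Remark~\ref{unique B}(2), the map $\alpha : R[B] \to S$ sending $y_e$ to $e$ is an $R$-algebra isomorphism. Since $R$ is indecomposable, Lemma~\ref{lem:3.2}(4) gives $\Id(R[B]) = \{y_e : e \in B\}$, and transporting this along $\alpha$ yields $\Id(S) = B$. In particular, every nonzero element of $\Id(S)$ already lies in the faithful generating algebra $B$, hence is faithful.

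For the backward direction, assume $S$ is idempotent generated and that every nonzero idempotent of $S$ is faithful. Since $S$ is idempotent generated, $\Id(S)$ generates $S$ as an $R$-algebra; the hypothesis then says that $\Id(S)$ is itself a faithful generating algebra of idempotents of $S$, so $S$ is a Specker $R$-algebra by Definition~\ref{def of Specker}.

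For the concluding uniqueness claim, suppose $S$ is a Specker $R$-algebra and let $B$ be any faithful generating algebra of idempotents of $S$. Running the forward argument above with this particular $B$ shows that $B$ equals $\Id(S)$ as a subset of $S$. Hence $\Id(S)$ is the unique faithful generating algebra of idempotents of $S$. The only step requiring care is the tracking of the identification $\alpha$ between $B$ and its image in $\Id(S)$, but once Lemma~\ref{lem:3.2}(4) is in hand there is no real obstacle; it does the essential work of promoting ``$B$ generates $S$'' to ``$B$ exhausts $\Id(S)$'' under the indecomposability of $R$.
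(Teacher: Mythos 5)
Your proposal is correct and follows essentially the same route as the paper: the backward direction is the same one-line observation, and the forward direction identifies $S$ with $R[B]$ via Remark~\ref{unique B}(2) and invokes Lemma~\ref{lem:3.2}(4) to conclude $\Id(S)=B$, exactly as the paper does (the paper then cites Lemma~\ref{lem:faithful} for faithfulness of the $y_e$, while you appeal directly to the faithfulness of $B$ -- the same fact under the identification). Your explicit handling of the uniqueness clause is a correct, if slightly more spelled-out, version of what the paper leaves implicit.
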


\begin{proof}
If each nonzero idempotent of $S$ is faithful, then $\Id(S)$ is a faithful generating algebra of idempotents of $S$, and so $S$ is a Specker $R$-algebra. Conversely, suppose that $S$ is a Specker $R$-algebra. Then $S$ has a faithful generating algebra of idempotents $B$, and we identify $S$ with $R[B]$.
%By Remark~\ref{unique B}.2, we identify $S$ with $R[B]$.
Because $R$ is indecomposable, Lemma~\ref{lem:3.2}.4 implies that $\Id(S) = \{ y_e : e \in B\}$. Thus, by Lemma~\ref{lem:faithful}, each nonzero idempotent of $S$ is faithful.
\end{proof}

The considerations of the previous section give rise to two functors $\mathcal I:{\bf Sp}_R\to{\bf BA}$ and $\mathcal S:\mathbf{BA} \to \mathbf{Sp}_R$. The functor $\mathcal I$ associates with each $S\in{\bf Sp}_R$ the Boolean algebra $\Id(S)$ of idempotents of $S$, and with each $R$-algebra homomorphism $\alpha:S\to S'$ the restriction $\mathcal I(\alpha)=\alpha|_{\Id(S)}$ of $\alpha$ to $\Id(S)$. The functor $\mathcal S$ associates with each $B\in \mathbf{BA}$ the Specker $R$-algebra $R[B]$, and with each Boolean homomorphism $\sigma:B\to B'$ the induced $R$-algebra homomorphism $\alpha:R[B] \to R[B']$ that sends each $y_e$ to $y_{\sigma(e)}$.

\begin{lemma}\label{adjoint}
The functor $\mathcal S$ is left adjoint to the functor $\mathcal I$.
\end{lemma}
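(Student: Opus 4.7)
The plan is to exhibit, for each Boolean algebra $B$ and each Specker $R$-algebra $S$, a natural bijection
\[
\Hom_{\mathbf{Sp}_R}(\mathcal{S}(B),S) \;\cong\; \Hom_{\mathbf{BA}}(B,\mathcal{I}(S)),
\]
by packaging Lemma~\ref{UMP} as the unit--counit data of the adjunction. The natural candidate for the unit is $\eta_B := i_B : B \to \Id(R[B]) = \mathcal{I}\mathcal{S}(B)$ from Lemma~\ref{lem:faithful}, which is a Boolean homomorphism. The counit $\varepsilon_S : \mathcal{S}\mathcal{I}(S) = R[\Id(S)] \to S$ is the unique $R$-algebra homomorphism produced by Lemma~\ref{UMP} applied to the inclusion $\Id(S) \hookrightarrow \Id(S)$; concretely, $\varepsilon_S(y_e) = e$ for each $e \in \Id(S)$.

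First I would define the bijection explicitly. Given an $R$-algebra homomorphism $\alpha : R[B] \to S$, note that any ring homomorphism carries idempotents to idempotents, so $\alpha$ restricts to a map $\Id(R[B]) \to \Id(S)$ that is a Boolean homomorphism (the Boolean operations on idempotents are polynomial in the ring operations). Composing with $i_B$ yields a Boolean homomorphism $\Phi(\alpha) := \alpha \circ i_B : B \to \Id(S)$. In the other direction, given a Boolean homomorphism $\sigma : B \to \Id(S)$, Lemma~\ref{UMP} produces a unique $R$-algebra homomorphism $\Psi(\sigma) : R[B] \to S$ with $\Psi(\sigma) \circ i_B = \sigma$. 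That $\Phi \circ \Psi = \Id$ is immediate from the defining equation, and $\Psi \circ \Phi = \Id$ follows from the uniqueness clause of Lemma~\ref{UMP}: given $\alpha$, both $\alpha$ and $\Psi(\Phi(\alpha))$ are $R$-algebra homomorphisms $R[B] \to S$ whose composition with $i_B$ equals $\alpha \circ i_B$, so they agree.

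Second I would verify naturality. For a Boolean homomorphism $\sigma : B' \to B$ the functor $\mathcal{S}$ sends it to the unique $R$-algebra homomorphism $R[B'] \to R[B]$ with $y_e \mapsto y_{\sigma(e)}$, that is, to $\Psi(i_B \circ \sigma)$; thus for any $\alpha : R[B] \to S$,
\[
\Phi\bigl(\alpha \circ \mathcal{S}(\sigma)\bigr) = \alpha \circ \mathcal{S}(\sigma) \circ i_{B'} = \alpha \circ i_B \circ \sigma = \Phi(\alpha) \circ \sigma,
\]
using that $\mathcal{S}(\sigma) \circ i_{B'} = i_B \circ \sigma$ by construction. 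Naturality in $S$ is automatic: for $\beta : S \to S'$ one has $\Phi(\beta \circ \alpha) = \beta \circ \alpha \circ i_B = \mathcal{I}(\beta) \circ \Phi(\alpha)$, since $\mathcal{I}(\beta)$ is by definition the restriction of $\beta$ to idempotents.

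There is no real obstacle; the content of the lemma is essentially a repackaging of Lemma~\ref{UMP}. The only point that warrants care is checking that the restriction of an $R$-algebra homomorphism to the idempotent lattice is actually a Boolean homomorphism (and not merely a multiplicative map), but this is standard: $\vee$, $\wedge$, and $\lnot$ on idempotents are given by the ring-theoretic formulas $e+f-ef$, $ef$, $1-e$, all preserved by any unital ring homomorphism.
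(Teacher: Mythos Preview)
Your proposal is correct and takes essentially the same approach as the paper: both rest entirely on the universal mapping property of Lemma~\ref{UMP}. The paper's proof is a one-liner that cites \cite[Ch.~IV, Thm.~1.2]{Mac71} to convert that universal property directly into the adjunction, whereas you have written out the hom-set bijection and the naturality checks by hand.
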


\begin{proof}
By definition, $\mathcal{I}(\mathcal{S}(B)) = \Id(R[B])$. By \cite[Ch.~IV, Thm.~1.2]{Mac71}, the universal mapping property established in Lemma \ref{UMP} is equivalent to the fact that $\mathcal{S}$ is left adjoint to $\mathcal{I}$.
\end{proof}

We show that the functors $\mathcal I,\mathcal S$ form an equivalence of ${\bf Sp}_R$ and {\bf BA} precisely when $R$ is indecomposable.

\begin{theorem}\label{thm:3.4}
The following are equivalent.
\begin{enumerate}
\item $R$ is indecomposable.
\item $\mathcal I \circ \mathcal S \cong 1_{\bf BA}$.
\item $\mathcal S \circ \mathcal I \cong 1_{{\bf Sp}_R}$.
\item The functors $\mathcal I$ and $\mathcal S$ yield an equivalence of $\mathbf{Sp}_R$ and $\mathbf{BA}$.
\end{enumerate}
\end{theorem}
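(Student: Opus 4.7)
The plan is to use that, by Lemma~\ref{adjoint}, $\mathcal{S}$ is left adjoint to $\mathcal{I}$. The unit of this adjunction is $\eta_B=i_B\colon B\to\Id(R[B])$, and the counit $\varepsilon_S\colon R[\Id(S)]\to S$ is, by Lemma~\ref{UMP} applied to the identity map $\Id(S)\to\Id(S)$, the unique $R$-algebra homomorphism sending $y_e\mapsto e$. Condition (2) asserts that $\eta$ is a natural isomorphism, (3) asserts that $\varepsilon$ is a natural isomorphism, and (4) is precisely the conjunction of (2) and (3), by the definition of equivalence of categories. It therefore suffices to establish (1)$\Leftrightarrow$(2) and (1)$\Leftrightarrow$(3).

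For (1)$\Rightarrow$(2), Lemma~\ref{lem:3.2}(4) shows $i_B$ is a Boolean isomorphism for every $B$ once $R$ is indecomposable, and naturality of $\eta$ is built into its definition. For (1)$\Rightarrow$(3), given a Specker $R$-algebra $S$ with $R$ indecomposable, Theorem~\ref{faithful} identifies $\Id(S)$ itself as a faithful generating algebra of idempotents of $S$, and then Remark~\ref{unique B}(2) says the canonical $R$-algebra map $R[\Id(S)]\to S$ sending $y_e\mapsto e$ — which is exactly $\varepsilon_S$ — is an isomorphism.

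For (2)$\Rightarrow$(1), I would instantiate (2) at the two-element Boolean algebra $\mathbf{2}$. By Lemma~\ref{lem:3.2}(3), $\Id(R[\mathbf{2}])$ is the coproduct in $\mathbf{BA}$ of $\Id(R)$ and $\mathbf{2}$; since $\mathbf{2}$ is initial in $\mathbf{BA}$, this coproduct is $\Id(R)$ itself. Hypothesis (2) then forces $\Id(R)\cong\mathbf{2}$, i.e., $R$ is indecomposable.

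The implication (3)$\Rightarrow$(1) looks like the main obstacle, since one cannot merely compare Boolean coproduct structure (a nontrivial Boolean algebra can be isomorphic to its coproduct with itself, e.g.\ any countably generated free one). My plan is to apply (3) to the Specker $R$-algebra $S=R$ itself (with faithful generating algebra $\mathbf{2}\subseteq\Id(R)$). Suppose for contradiction that some $e\in\Id(R)$ satisfies $e\ne 0,1$. Consider $s=y_e-e\cdot 1\in R[\Id(R)]$; using $1=y_e+y_{\lnot e}$, rewrite $s=(1-e)y_e+(-e)y_{\lnot e}$. By Remark~\ref{uniqueness}(2), the orthogonal set $\{y_e,y_{\lnot e}\}$ of faithful idempotents forces any two $R$-linear representations in these idempotents to agree coefficientwise, so $s=0$ would imply $1-e=0=-e$, contradicting $e\ne 0,1$. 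Hence $s\ne 0$, yet $\varepsilon_R(s)=e-e=0$, contradicting the injectivity of $\varepsilon_R$. Therefore $\Id(R)=\mathbf{2}$.
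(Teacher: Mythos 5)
Your proposal is correct and follows the paper's strategy almost step for step: set up the adjunction from Lemma~\ref{adjoint}, get (1)$\Rightarrow$(2) from Lemma~\ref{lem:3.2}(4), get (1)$\Rightarrow$(3) from Theorem~\ref{faithful} together with the fact that the counit $y_e\mapsto e$ is then an isomorphism, identify (4) with the conjunction of (2) and (3), and prove the converses by instantiating at $\mathbf{2}$ and at $S=R$ respectively. The one place you genuinely diverge is (2)$\Rightarrow$(1): the paper computes $R[\mathbf{2}]\cong R$ directly from the generators of $I_{\mathbf{2}}$ (namely $x_0$ and $x_1-1$) and then reads off $\Id(R[\mathbf{2}])\cong\Id(R)$, whereas you invoke Lemma~\ref{lem:3.2}(3) to see $\Id(R[\mathbf{2}])$ as the coproduct of $\Id(R)$ with the initial object $\mathbf{2}$, hence as $\Id(R)$ itself. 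Both are valid; yours is slightly slicker but leans on the coproduct description, while the paper's is self-contained at the level of the presentation of $R[B]$. For (3)$\Rightarrow$(1) your witness $s=y_e-e\cdot 1=(1-e)y_e+(-e)y_{\lnot e}$ plays exactly the role of the paper's element $ey_{\lnot e}$: a nonzero element of $R[\Id(R)]$ (nonzero by faithfulness of $y_e,y_{\lnot e}$ and Remark~\ref{uniqueness}(2)) killed by the counit $\varepsilon_R$. Note that, like the paper, you are reading (3) as the assertion that the counit itself is an isomorphism (you appeal to ``the injectivity of $\varepsilon_R$''); this is the intended reading and matches the paper's ``$R[\Id(R)]\cong R$ via $\alpha_R$,'' so it is not a gap relative to the source.
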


\begin{proof}
(1) $\Leftrightarrow$ (2): Suppose that $R$ is indecomposable. We have $\mathcal I(\mathcal S(B))=\func{Id}(R[B])$. By Lemma~\ref{lem:3.2}.4, $i_B:B\to\func{Id}(R[B])$ is a Boolean isomorphism, and by Lemma~\ref{adjoint}, $i_B$ is natural, so (2) follows. Conversely, if (2) holds, then $\Id(R[{\bf 2}]) \cong {\bf 2}$. Observe that $R[{\bf 2}]\cong R$. To see this, let ${\bf 2}=\{0,1\}$ and recall from Definition~\ref{def:R[B]} the ideal $I_{\bf 2}$ defining $R[{\bf 2}]$. By listing all the generators for $I_{\bf 2}$, we see that $I_{\bf 2}$ is generated by $x_0,x_1-1$. Therefore, $R[{\bf 2}] \cong R[x_0,x_1]/(x_0,x_1-1) \cong R$. Thus, ${\bf 2} \cong \Id(R[{\bf 2}]) \cong \Id(R)$, which shows that $R$ is indecomposable.

(1) $\Leftrightarrow$ (3): Suppose that $R$ is indecomposable. We have $\mathcal S(\mathcal I(S))=R[\Id(S)]$ for each Specker $R$-algebra $S$. Furthermore, by Theorem~\ref{faithful}, $\Id(S)$ is a faithful generating algebra of idempotents of $S$. Consequently, the $R$-algebra homomorphism $\alpha_S : R[\Id(S)] \to S$ sending $y_e$ to $e$ for each $e \in \Id(S)$ is an isomorphism. By Lemma~\ref{adjoint}, $\alpha_S$ is natural, so (3) follows. Conversely, if (3) holds, then $R[\Id(R)] \cong R$ via $\alpha_R$. If $e \ne 0,1$ is an idempotent in $R$, then $\alpha_R(ey_{\lnot e}) = e(\lnot e) = 0$, a contradiction to Lemma~\ref{lem:faithful}. Thus, $\Id(R) = \{0,1\}$, so $R$ is indecomposable.

(1) $\Leftrightarrow$ (4): In view of Lemma~\ref{adjoint}, (2) and (3) together are equivalent to (4). Thus, by what we have proven already, (1) implies both (2) and (3), so implies (4). Conversely, if (4) holds, then (2) holds, so (1) holds as (1) is equivalent to (2).
%
%{\bf Fix!} That the functors $\mathcal I$ and $\mathcal S$ yield an equivalence of $\mathbf{Sp}_R$ and $\mathbf{BA}$ is now a straightforward consequence of the above. For a Boolean algebra $B$, we have $\mathcal{I}(\mathcal{S}(B))=\Id(R[B])$. By Lemma~\ref{lem:3.2}.3, $\Id(R[B])$ is isomorphic to the coproduct of $\Id(R)$ and $B$. Since $R$ is not indecomposable, $|\Id(R)| > 2$. Let also $|B|>2$. Then $B$ is not isomorphic to $\Id(R[B])$. {\bf Justify!} Thus, $\mathcal I,\mathcal S$ do not form an equivalence between ${\bf Sp}_R$ and ${\bf BA}$.
\end{proof}

\begin{corollary}\label{cor:3.5}
If $R$ is indecomposable, then ${\bf Sp}_R$ is dually equivalent to {\bf Stone}.
\end{corollary}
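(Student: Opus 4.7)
The plan is to assemble the corollary directly from Theorem~\ref{thm:3.4} together with classical Stone duality. Since $R$ is assumed indecomposable, Theorem~\ref{thm:3.4} supplies an equivalence of categories $\mathcal{S}:\mathbf{BA}\to\mathbf{Sp}_R$ and $\mathcal{I}:\mathbf{Sp}_R\to\mathbf{BA}$ with $\mathcal{I}\circ\mathcal{S}\cong 1_{\mathbf{BA}}$ and $\mathcal{S}\circ\mathcal{I}\cong 1_{\mathbf{Sp}_R}$. Independently, Stone duality provides a contravariant equivalence between $\mathbf{BA}$ and $\mathbf{Stone}$ via the functors sending a Boolean algebra to its Stone space of ultrafilters (equivalently, prime filters) and a Stone space to its Boolean algebra of clopen sets.

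First I would recall (or simply cite) Stone duality in the form of a dual equivalence $\mathbf{BA}\simeq\mathbf{Stone}^{\mathrm{op}}$. Then I would compose this dual equivalence with the (covariant) equivalence $\mathbf{Sp}_R\simeq\mathbf{BA}$ from Theorem~\ref{thm:3.4}. The composition of a covariant equivalence with a contravariant equivalence is contravariant, hence a dual equivalence, yielding $\mathbf{Sp}_R\simeq\mathbf{Stone}^{\mathrm{op}}$. Concretely, the dual equivalence sends a Specker $R$-algebra $S$ to the Stone space of $\mathcal{I}(S)=\Id(S)$, and sends a Stone space $X$ to $R[\mathrm{Clop}(X)]$, which by Theorem~\ref{new def} is isomorphic to the Boolean power $C(X,R_{\func{disc}})$. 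So at the level of objects, a Specker $R$-algebra corresponds to the Stone space of its Boolean algebra of idempotents, and a Stone space $X$ corresponds to the $R$-algebra of continuous functions $X\to R_{\func{disc}}$.

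Because the proof is entirely formal, there is essentially no obstacle: every nontrivial ingredient (the equivalence with $\mathbf{BA}$, and Stone duality) is already in place. The one point to be careful about is verifying that the composed functors really do give a dual equivalence, but this is automatic from the fact that composing a natural isomorphism of functors with another functor again yields a natural isomorphism, so the unit and counit of the composed adjunction remain natural isomorphisms. Thus the proof reduces to a single sentence citing Theorem~\ref{thm:3.4} and Stone duality.
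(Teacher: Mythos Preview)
Your proposal is correct and follows exactly the same route as the paper: invoke Theorem~\ref{thm:3.4} to get $\mathbf{Sp}_R \simeq \mathbf{BA}$, then compose with Stone duality $\mathbf{BA} \simeq \mathbf{Stone}^{\mathrm{op}}$ to obtain the dual equivalence. The paper's proof is the one-sentence version you anticipated; your additional remarks about the explicit form of the composed functors match the discussion the paper gives immediately after the corollary.
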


\begin{proof}
By Theorem~\ref{thm:3.4}, $\mathbf{Sp}_R$ is equivalent to $\mathbf{BA}$. By Stone duality, $\mathbf{BA}$ is dually equivalent to {\bf Stone}. Combining these two results yields that ${\bf Sp}_R$ is dually equivalent to {\bf Stone}.
\end{proof}

\begin{remark}
In \cite[Sec.~7]{Rib69}, Ribenboim defines the category of Boolean powers of $\mathbb{Z}$ and proves that this category is equivalent to {\bf BA}. In view of  Theorem~\ref{new def} and Remark~\ref{Foster remark}, Ribenboim's result is a particular case of Theorem~\ref{thm:3.4}. Similarly, it follows from \cite[Sec.~5]{BMO13a} that ${\bf Sp}_\mathbb{R}$ is equivalent to {\bf BA}. Again, this result is a particular case of Theorem~\ref{thm:3.4}. Moreover, by Theorem~\ref{new def}, Specker $\mathbb{R}$-algebras are isomorphic to Boolean powers of $\mathbb{R}$.
\end{remark}

As noted in the proof of Corollary~\ref{cor:3.5}, the functors ${\mathcal{I}}$ and ${\mathcal{S}}$ of Theorem~\ref{thm:3.4} compose with the functors of Stone duality to give functors between ${\bf Sp}_R$ and {\bf Stone}. The resulting contravariant functor from {\bf Stone} to ${\bf Sp}_R$ is the Boolean power functor $(-)^*:{\bf Stone}\to{\bf Sp}_R$  that associates with each $X\in{\bf Stone}$ the Boolean power $X^*=C(X,R_{\func{disc}})$, and with each continuous map $\varphi:X\to Y$ the $R$-algebra homomorphism $\varphi^*:Y^*\to X^*$ given by $\varphi(f)=f\circ\varphi$. The functor $(-)_*:{\bf Sp}_{R} \rightarrow {\bf Stone}$ sends the Specker $R$-algebra $S$ to the Stone space of $\Id(S)$ and associates with each $R$-algebra homomorphism $S \rightarrow T$, the continuous map from the Stone space of $\Id(S)$ to the Stone space of $\Id(T)$. By Corollary~\ref{cor:3.5}, these two functors yield a dual equivalence when $R$ is indecomposable. In general, we have the following diagram.

\[
\xymatrix{
{\bf BA}  \ar@/_/[ddrr]_{\mathcal S} \ar@{<->}[rrrr]^{\textrm{Stone Duality}}   &&&& {\bf Stone} \ar@/^/[ddll]^{(-)^*}  \\
&&&& \\
&& {\bf Sp}_R \ar@/^/[rruu]^{(-)_*} \ar@/_/[uull]_{\mathcal I}&& }
\]

We show in Proposition~\ref{new top} that the functor $(-)_*:{\bf Sp}_R\to{\bf Stone}$ has a natural interpretation when $R$ is indecomposable, one that does not require reference to $\Id(S)$. Let $S$ be a Specker $R$-algebra and let $\Hom_R(S,R)$ be the set of $R$-algebra homomorphisms from $S$ to $R$. We define a topology on $\Hom_R(S,R)$ by declaring $\{U_s:s\in S\}$ as a subbasis, where $U_s = \{\alpha \in \Hom_R(S,R):\alpha(s)=0\}$. We also recall that the Stone space of a Boolean algebra $B$ can be described as the set $\Hom(B,{\bf 2})$ of Boolean homomorphisms from $B$ to {\bf 2}, topologized by the basis $\{Z(e):e\in B\}$, where $Z(e)=\{\sigma\in\Hom(B,{\bf 2}):\sigma(e)=0\}$.
%the basic open sets of $\Hom_R(S,R)$ to be of the form $$U_{s_1,\ldots,s_n} = \{\alpha \in \Hom_R(S,R):\alpha(s_i)=0 {\mbox{ for all }} i =1,2,\ldots,n\},$$ where $s_1,\ldots,s_n \in S$.

%Assume that $R$ is indecomposable. Then by Proposition~\ref{prop:2.8}, $\Id(S)$ is a faithful generating subalgebra of $\Id(S)$. Therefore, by Theorem~\ref{new def}, each Boolean homomorphism $\Id(S) \rightarrow {\bf 2}$ lifts to an $R$-algebra homomorphism in $\Hom_R(S,R)$. Moreover, since $\Id(S)$ generates $S$, this lifting is unique. Thus, there is a mapping $\varphi$ from the Stone space $\Hom(\Id(S),{\bf 2})$ of $\Id(S)$ to $\Hom_R(S,R)$ that sends $\sigma \in \Hom(\Id(S),{\bf 2})$ to its unique extension in $\Hom_R(S,R)$. As we show next, this mapping is a homeomorphism.

\begin{proposition}\label{new top}
Let $R$ be indecomposable, and let $S$ be a Specker $R$-algebra. Then $\Hom_R(S,R)$ is homeomorphic to $\Hom(\Id(S),{\bf 2})$.
\end{proposition}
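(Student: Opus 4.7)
My plan is to exhibit an explicit homeomorphism $\Phi\colon \Hom_R(S,R)\to \Hom(\Id(S),{\bf 2})$ given by restriction, $\Phi(\alpha)=\alpha|_{\Id(S)}$. Since $R$ is indecomposable, $\Id(R)=\{0,1\}={\bf 2}$, and any $R$-algebra homomorphism sends idempotents to idempotents, so $\Phi(\alpha)$ really lands in $\Hom(\Id(S),{\bf 2})$. For the inverse, use Theorem~\ref{faithful}: because $R$ is indecomposable, $\Id(S)$ is a faithful generating algebra of idempotents of $S$, so by Remark~\ref{unique B}.2 we may identify $S$ with $R[\Id(S)]$. Then Lemma~\ref{UMP}, viewing ${\bf 2}$ as a Boolean subalgebra of $\Id(R)$, lifts each $\sigma\in\Hom(\Id(S),{\bf 2})$ to a unique $R$-algebra homomorphism $\Psi(\sigma)\colon S\to R$. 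Uniqueness together with the fact that $S$ is generated by $\Id(S)$ over $R$ shows at once that $\Phi$ and $\Psi$ are mutually inverse bijections.

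For the topology, continuity of $\Phi$ is immediate from the subbasis: for any $e\in\Id(S)\subseteq S$,
\[
\Phi^{-1}(Z(e))=\{\alpha\in\Hom_R(S,R):\alpha(e)=0\}=U_e,
\]
so the preimage of each basic open is subbasic open. The less automatic direction is continuity of $\Psi$, equivalently that $\Phi(U_s)$ is open for every $s\in S$. This is where the full orthogonal decomposition does the work. Write $s=\sum_{i=1}^n a_ie_i$ in full orthogonal form with the $a_i$ distinct and $e_i\in\Id(S)$. For $\alpha\in\Hom_R(S,R)$ the elements $\alpha(e_i)$ are orthogonal idempotents of $R$ summing to $1$; because $R$ is indecomposable, exactly one, say $\alpha(e_{j(\alpha)})$, equals $1$ and the rest vanish. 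Hence $\alpha(s)=a_{j(\alpha)}$, and $\alpha\in U_s$ iff $a_{j(\alpha)}=0$. Since the $a_i$ are distinct, at most one index $k$ has $a_k=0$, so $U_s$ is either empty or equals $\{\alpha:\alpha(e_k)=1\}$, which corresponds under $\Phi$ to $\{\sigma:\sigma(e_k)=1\}=Z(\lnot e_k)$, a basic open set. Thus $\Phi(U_s)$ is open.

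The main obstacle is really the bookkeeping in the last step: one has to notice that idempotents in an indecomposable ring behave like a partition, so a full orthogonal decomposition of $s$ evaluated at $\alpha$ reduces to a single coefficient, and then the distinctness clause in Lemma~\ref{lem:2.1} guarantees that the image of a subbasic open in $\Hom_R(S,R)$ is in fact a single basic open (or empty) in $\Hom(\Id(S),{\bf 2})$. Once this is in hand, $\Phi$ is a continuous bijection with continuous inverse, hence a homeomorphism. Alternatively, one can short-circuit openness by noting that both sides are compact Hausdorff — $\Hom(\Id(S),{\bf 2})$ is a Stone space, and $\Hom_R(S,R)$ then inherits compactness and Hausdorffness via $\Phi$ once the bijection and continuity are established — but the direct calculation above is cleaner and does not require an auxiliary compactness argument.
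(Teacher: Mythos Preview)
Your main argument is correct and follows essentially the same route as the paper: take the restriction map, get bijectivity from the universal property of $R[\Id(S)]$, check $\Phi^{-1}(Z(e))=U_e$, and then reduce an arbitrary $U_s$ to idempotent data via an orthogonal decomposition. The only cosmetic difference is that the paper writes $s$ in orthogonal form with \emph{nonzero} coefficients and obtains $U_s=U_{e_1}\cap\cdots\cap U_{e_n}$, whereas you use the full orthogonal form with distinct coefficients and find $U_s$ is either empty or $\{\alpha:\alpha(e_k)=1\}$; these describe the same set, since $\bigcap_{i\ne k}Z(e_i)=Z(\lnot e_k)$.

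One caution about your closing aside: a continuous bijection \emph{into} a compact Hausdorff space does not transport compactness back to the domain, so you cannot say $\Hom_R(S,R)$ ``inherits compactness and Hausdorffness via $\Phi$'' from the bijection and continuity of $\Phi$ alone. That shortcut would require an independent proof that $\Hom_R(S,R)$ is compact. Since you explicitly do not rely on this and your direct openness calculation is complete, the proof stands as written.
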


\begin{proof}
Set $B = \func{Id}(S)$ and define $\varphi : \Hom_R(S,R) \to \Hom(B,{\bf 2})$ by $\varphi(\alpha) = \alpha|_B$. By Theorem~\ref{new def}, $\varphi$ is onto. It is 1-1 because if $\alpha|_B = \beta|_B$, then $\alpha, \beta$ are $R$-algebra homomorphisms which agree on a generating set of $S$, so $\alpha = \beta$.
%We now show that $\varphi$ is continuous and its inverse is continuous.
%Recall the topology on $\Hom(B,{\bf 2})$ has a basis $\{ Z(e) : e \in B\}$, where $Z(e) = \{\sigma \in \Hom(B,{\bf 2}) : \sigma(e) = 0\}$.
We have
\[
\varphi^{-1}(Z(e)) = \{ \alpha \in \Hom_R(S,R) : \alpha(e) = 0\} = U_e,
\]
which proves that $\varphi$ is continuous. It also shows that $\varphi(U_e) = Z(e)$. Now, let $s \in S$. If $s = 0$, then $U_s = \Hom_R(S,R)$, so $\varphi(U_s) = \Hom(B,{\bf 2})$ is open. Otherwise, we may write $s = \sum_i a_i e_i$ with the $a_i\in R$ nonzero and the $e_i\in B$ orthogonal. If $\alpha \in U_s$, then $s \in \ker(\alpha)$, so $a_ie_i = se_i \in \ker(\alpha)$. Thus, $e_i \in \ker(\alpha)$ since otherwise $\alpha(e_i) = 1$, and this contradicts $a_i \ne 0$. Therefore, $\alpha \in U_{e_1}\cap \dots \cap U_{e_n}$. The reverse inclusion is obvious. Thus, $U_s = U_{e_1}\cap\dots\cap U_{e_n}$, and so $\varphi(U_s) = Z(e_1) \cap \cdots \cap Z(e_n)$. Since the $U_s$ form a subbasis for $\Hom_R(S,R)$, this proves that $\varphi^{-1}$ is continuous. Consequently, $\varphi$ is a homeomorphism.
%Since $S$ is generated as an $R$-algebra by $B$ and any $R$-algebra homomorphism is defined by its action on generators, $\varphi$ is 1-1. We claim that the mapping is continuous. It suffices to show that for each $s \in S$, there exists $e \in B$ such that for each $\alpha \in \Hom_R(S,R)$ extended from a map in $\Hom(B,{\bf 2})$, we have $\alpha(s) = 0$ iff $\alpha(e) = 0$. Let $s \in S$, and write $s = \sum_{i=1}^n a_ie_i$, where $0 \ne a_i \in R$ and $e_1,\ldots,e_n$ are orthogonal idempotents in $B$. Let $\alpha \in \Hom_R(S,R)$ be extended from a map in $\Hom(B,{\bf 2})$. We claim that $\alpha(s) = 0$ iff $\alpha(e_1 + \cdots + e_n ) = 0$. For each $i$, we have $\alpha(s)\alpha(e_i) = \alpha(a_ie_i)$ and $\alpha(e_1+\cdots+e_n)\alpha(e_i) = \alpha(e_i)$. Therefore, it suffices to show that for each $0 \ne a \in R$, $e \in B$, we have $\alpha(ae) = 0$ iff $\alpha(e) = 0$. It is clear that if $\alpha(e) = 0$, then $\alpha(ae) = 0$. Conversely, if $\alpha(ae) = 0$ but $\alpha(e) \ne 0$, then since $\alpha(B) \in \{0,1\}$, it must be that $\alpha(e) = 1$, and hence $a = \alpha(ae) = 0$, contrary to the assumption that $a \ne 0$. This proves that the mapping of the lemma is continuous. Since it is clearly open, the lemma is proved.
\end{proof}

It follows that when $R$ is indecomposable, the space $\Hom_R(S,R)$ of a Specker $R$-algebra $S$ is homeomorphic to the Stone space of $\Id(S)$. This allows us to describe the contravariant functor $(-)_*:{\bf Sp}_R \rightarrow {\bf Stone}$ as follows. Associate with each $S \in {\bf Sp}_R$ the Stone space $S_* = \Hom_R(S,R)$, and with each $R$-algebra homomorphism $\alpha:S\to T$, the continuous map $\alpha_*:T_*\to S_*$ given by $\alpha_*(\delta)=\delta\circ\alpha$ for each $\delta\in T_*=\Hom_R(T,R)$. Thus, we have a description of $(-)_*$ that does not require passing to idempotents.

We conclude this section by giving a module-theoretic characterization of Specker $R$-algebras for indecomposable $R$. Bergman \cite[Cor.~3.5]{Ber72} has shown that a Boolean power $C(X,R_{\mathrm{disc}})$ of the ring $R$ is a free $R$-module having a basis of idempotents. Thus, by Theorem~\ref{new def}, every Specker $R$-algebra is a free $R$-module having a basis of idempotents. We prove in the next theorem that the converse of the corollary is true when $R$ is indecomposable, and that in this case freeness is equivalent to projectivity.

\begin{theorem}\label{thm:3.11}
Let $R$ be indecomposable and let $S$ be an idempotent generated commutative $R$-algebra. Then the following are equivalent.
\begin{enumerate}
\item $S$ is a Specker $R$-algebra.
\item $S$ is a free $R$-module.
\item $S$ is a projective $R$-module.
\end{enumerate}
\end{theorem}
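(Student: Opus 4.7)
The plan is to prove $(1) \Rightarrow (2) \Rightarrow (3) \Rightarrow (1)$. Since Bergman's result recalled just before the statement together with Theorem~\ref{new def} gives $(1) \Rightarrow (2)$, and $(2) \Rightarrow (3)$ is standard, the real work is $(3) \Rightarrow (1)$. For this I would apply Theorem~\ref{faithful}, reducing the task to showing that every nonzero $f \in \Id(S)$ is faithful over $R$. So I fix such an $f$, set $I = \ann_R(f)$ and $I' = \ann_R(I)$, and aim to deduce $I = 0$.

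The first key step is to use projectivity to locate $f$ inside the ideal $I' S$. The dual basis lemma produces $\phi_k \in \Hom_R(S,R)$ and $s_k \in S$ with $s = \sum_k \phi_k(s)s_k$ for every $s \in S$. For any $\phi \in \Hom_R(S,R)$ and $a \in I$, the identity $a\phi(f) = \phi(af) = 0$ places $\phi(f)$ in $I'$; applied to the $\phi_k$, this gives $f$ as a finite $R$-linear combination of the $s_k$ with coefficients in $I'$. Using $f^2 = f$ to replace each $s_k$ by $f s_k$ and then expanding each $f s_k$ as a finite $R$-linear combination of idempotents multiplied by $f$ (this is where idempotent generation enters), I arrive at a representation
\[
f = \sum_{\alpha=1}^{m} c_\alpha p_\alpha, \qquad c_\alpha \in I', \ \ p_\alpha \in \Id(S),\ \ p_\alpha \le f.
\]

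Next I would refine to atoms. Let $a_1, \dots, a_n$ be the atoms below $f$ in the finite Boolean subalgebra of $\Id(S)$ generated by the $p_\alpha$, so that $f = \sum_k a_k$ and each $p_\alpha$ is a sum of some of the $a_k$. Substituting into the display yields $f = \sum_k \mu_k a_k$ with $\mu_k \in I'$; comparing with $f = \sum_k a_k$ and using orthogonality to multiply by each $a_l$ gives $(1-\mu_l)a_l = 0$, and hence $1 \in I' + \ann_R(a_l)$ for every $l$.

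The step I expect to be the main obstacle is packaging these local coprimeness statements into the single relation $1 \in I + I'$; the cleanest way I know is to write $1 = c_l + d_l$ with $c_l \in I'$ and $d_l \in \ann_R(a_l)$ and expand $\prod_l(c_l + d_l) = 1$. Every summand containing at least one factor $c_l$ lies in $I'$, so $\prod_l d_l \equiv 1 \pmod{I'}$; and since $d_j a_j = 0$, the product $\prod_l d_l$ annihilates each $a_j$ and hence $f = \sum_j a_j$, placing it in $I$. Together these give $1 \in I + I'$. Indecomposability of $R$ then finishes the argument: writing $1 = a+b$ with $a \in I$ and $b \in I'$ forces $ab = 0$, so $a = a(a+b) = a^2 \in \Id(R) = \{0,1\}$. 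Since $a = 1$ would give $f = 0$ against the choice of $f$, we must have $a = 0$, so $b = 1 \in \ann_R(I)$ and $I = 0$, proving $f$ is faithful and hence, by Theorem~\ref{faithful}, that $S$ is a Specker $R$-algebra.
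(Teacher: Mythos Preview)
Your argument is correct and reaches the same endpoint as the paper's proof---an equation $1=a+b$ with $ab=0$, forcing $a\in\Id(R)=\{0,1\}$---but the route is genuinely different. The paper lifts the problem to $R[B]$ via the canonical surjection $\alpha:R[B]\to S$ and an $R$-module splitting $\beta$, writes $\beta(e)=\sum a_iy_{e_i}$ in orthogonal form, and then uses \emph{localization at maximal ideals} to prove $\ann_R(e)+(a_1,\dots,a_n)R=R$: at any maximal ideal $M$ containing all the $a_i$, the relations $a_i(1-a_i)e_i=0$ force the image of $e$ in $S_M$ to vanish, so $\ann_R(e)\not\subseteq M$. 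You instead stay inside $S$, use the dual basis lemma to place $f$ in $I'S$ (with $I'=\ann_R(\ann_R(f))$), refine to atoms to obtain $1\in I'+\ann_R(a_l)$ for each atom $a_l\le f$, and then reach $1\in I+I'$ by a purely combinatorial expansion of $\prod_l(c_l+d_l)$. Your approach is more elementary in that it avoids both the auxiliary algebra $R[B]$ and localization; the paper's approach, on the other hand, exploits the faithfulness of the $y_{e_i}$ in $R[B]$ (Lemma~\ref{lem:faithful}) as a ready-made tool and gives a slightly sharper intermediate statement (coprimality with the finitely generated ideal $(a_1,\dots,a_n)R$ rather than with the possibly larger $I'$). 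One small point worth making explicit in your write-up: when passing to atoms, include $f$ among the generators of the finite Boolean subalgebra so that $f$ is genuinely the join of the atoms $a_1,\dots,a_n$.
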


\begin{proof}
As was discussed above, (1) $\Rightarrow$ (2) follows from \cite[Cor.~3.5]{Ber72} and Theorem~\ref{new def}, and (2) $\Rightarrow$ (3) is obvious. It remains to show that (3) $\Rightarrow$ (1). Let $B = \Id(S)$. By Lemma~\ref{UMP}, the inclusion $B \rightarrow S$ lifts to an $R$-algebra homomorphism $\alpha:R[B] \rightarrow S$. Since $S$ is generated by $B$, we have that $\alpha$ is onto. In particular, for each idempotent $e \in S$, we have $\alpha(y_e) = e$. Now since $S$ is a projective $R$-module, there exists an $R$-module homomorphism $\beta:S \rightarrow R[B]$ such that $\alpha(\beta(s)) = s$ for all $s \in S$. Let $e$ be an idempotent in $S$. Write $\beta(e) = \sum a_iy_{e_i}$ with the $a_i \in R$ and the $e_i\in\Id(S)$ orthogonal. Then $e = \alpha(\beta(e)) = \sum a_i\alpha(y_{e_i}) = \sum a_ie_i$.

First observe that for every $a \in \func{ann}_R(e)$, we have $aa_1 = \cdots = aa_n  =0$. Indeed, for $a \in \func{ann}_R(e)$, we have $0 =\beta(ae) = a\beta(e) = \sum aa_iy_{e_i}$, so that since by Lemma~\ref{lem:faithful}, each $y_{e_i}$ is faithful, we have $aa_i = 0$. This in turn implies that if $\func{ann}_R(e) \ne 0$, then $(a_1,\ldots,a_n)R$ is a proper ideal of $R$, as every element in $\func{ann}_R(e)$ annihilates $(a_1,\ldots,a_n)R$. We use these observations to show that either $\func{ann}_R(e) = 0$ or $e = 0$.

Suppose $\func{ann}_R(e) \ne 0$. We show that $e = 0$. First we claim that $\func{ann}_R(e) + (a_1,\ldots,a_n)R = R$. Let $M$ be a maximal ideal of $R$ containing $a_1,\ldots,a_n$. Since $e = \sum a_ie_i$ is an orthogonal decomposition of $e$ and $e$ is idempotent, it follows that $a_ie_i = a_i^2e_i$, and hence $a_i(1-a_i)e_i = 0$ for each $i$. As each $a_i \in M$, the image of $1-a_i$ in the localization $R_M$ is a unit, so $a_i(1-a_i)e_i = 0$ implies that the image of $a_ie_i$ in the ring $S_M$ is $0$. Since this holds for each $i$, it must be that the image of $e = \sum a_ie_i$ in $S_M$ is $0$. But then there exists $b \in R \setminus M$ such that $be = 0$; i.e., $\func{ann}_R(e) \not \subseteq M$. This proves that no maximal ideal of $R$ containing $(a_1,\ldots,a_n)R$ also contains ${\func{ann}}_R(e)$. Hence, ${\func{ann}}_R(e) + (a_1,\ldots,a_n)R = R$, so that there exist $a \in \func{ann}_R(e)$ and $b_1,\ldots,b_n \in R$ such that $a + \sum a_ib_i = 1$. By assumption ${\func{ann}}_R(e) \ne 0$, so  as established above,    $(a_1,\ldots,a_n)R$ is a proper ideal of $R$. In particular,  $1 \ne \sum a_ib_i$, so since $1=a + \sum a_ib_i$, this forces $a \ne 0$. As noted above, $aa_1 = \cdots = aa_n = 0$. Thus, $a(\sum a_ib_i) = 0$, so that multiplying both sides of the equation $a + \sum a_ib_i = 1$ by $a$, yields $a^2 = a$. Therefore, $a\in\Id(R)$, and since $R$ is indecomposable and $a\ne 0$, this forces $a = 1$. But $ae = 0$, so we conclude that $e =0$. This proves that every nonzero idempotent in $S$ is faithful, and hence, as $S$ is idempotent generated, $S$ is a Specker $R$-algebra.
\end{proof}

\begin{remark}
The assumption of indecomposability in the theorem is necessary: If $R$ is not indecomposable, then there exists an idempotent $a$ in $R$ distinct from $0,1$, so that $R = aR \oplus (1-a)R$, and hence $S := R/aR$ is a projective $R$-module that is generated as an $R$-algebra by the idempotent $1+aR$. Yet $1+aR$ is not faithful because it is annihilated by $a$, so $S$ is not a Specker $R$-algebra.
\end{remark}

\section{Specker algebras over a domain}

As follows from the previous section, having $R$ indecomposable allows one to prove several strong results about Specker $R$-algebras.
%By Theorem~\ref{thm:3.11},
Some of these results can be strengthened further provided $R$ is a domain. In this section we consider in more detail the case when $R$ is a domain. We first show that among idempotent generated commutative algebras over a domain $R$, the Specker $R$-algebras are simply those that are torsion-free $R$-modules. We give
then a necessary and sufficient condition for a Specker $R$-algebra $S$ to be a weak Baer ring and a Baer ring. For a domain $R$, the characterization of Specker $R$-algebras that are Baer rings yields a characterization of injective objects as well as the construction of injective hulls in ${\bf Sp}_R$. In addition, it provides a description of $S_*=\mathrm{Hom}_R(S,R)$ by means of minimal prime ideals of $S$.

\begin{proposition}\label{tfree char}
Let $R$ be a domain and let $S$ be an idempotent generated commutative $R$-algebra. Then $S$ is a Specker $R$-algebra iff $S$ is a torsion-free $R$-module.
\end{proposition}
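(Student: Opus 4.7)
The plan is to reduce both directions to already-established machinery, namely Lemma~\ref{lem:2.1} (together with Remark~\ref{uniqueness}.2) for one direction and the definition of a Specker $R$-algebra together with the domain hypothesis for the other. Note that since $R$ is a domain it is in particular indecomposable, so all results of Section~\ref{sec:3} apply.

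For the forward direction, assume $S$ is a Specker $R$-algebra and let $s \in S$ be nonzero. Using Lemma~\ref{lem:2.1}, write $s$ in orthogonal form $s = \sum_{i=1}^n a_i e_i$ with $a_i \in R$ nonzero, distinct, and the $e_i$ orthogonal faithful idempotents. If $a \in R$ satisfies $as = 0$, then $\sum (aa_i) e_i = 0$, and Remark~\ref{uniqueness}.2 yields $aa_i = 0$ for each $i$. Since $R$ is a domain and each $a_i \ne 0$, we conclude $a = 0$. Thus $S$ is a torsion-free $R$-module. (Alternatively, one can simply invoke Theorem~\ref{thm:3.11} to conclude $S$ is free, hence torsion-free over the domain $R$, but the direct argument is shorter.)

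For the converse, assume $S$ is idempotent generated and torsion-free as an $R$-module. To show $S$ is a Specker $R$-algebra, it suffices to exhibit a faithful generating algebra of idempotents; since $S$ is idempotent generated, the natural candidate is $B = \Id(S)$, which is a Boolean subalgebra of $\Id(S)$ generating $S$. So the only thing to check is that every nonzero idempotent $e \in \Id(S)$ is faithful. Suppose $a \in R$ satisfies $ae = 0$. Then torsion-freeness of $S$ together with $e \ne 0$ forces $a = 0$, as required.

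The argument is short and contains no genuine obstacle; the only care needed is to make sure the forward direction uses the uniqueness part of Lemma~\ref{lem:2.1} to propagate the annihilation from $s$ to each coefficient $a_i$, at which point the domain hypothesis does the rest.
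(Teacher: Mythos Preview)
Your proof is correct. The converse direction matches the paper's argument essentially verbatim: torsion-freeness makes every nonzero idempotent faithful, so $\Id(S)$ is a faithful generating algebra of idempotents.

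For the forward direction the paper takes a different route: it invokes the fact (discussed before Theorem~\ref{thm:3.11}, ultimately due to Bergman) that every Specker $R$-algebra is a free $R$-module, and free modules over a domain are torsion-free. Your direct argument via an orthogonal decomposition and Remark~\ref{uniqueness}.2 is more self-contained and avoids appealing to the freeness theorem; it trades a one-line citation for a short computation. Either approach is perfectly adequate here, and you even note the freeness alternative yourself.
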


\begin{proof}
As discussed before Theorem~\ref{thm:3.11}, a Specker $R$-algebra $S$ is a free $R$-module, and hence with $R$ a domain, $S$ is torsion-free. Conversely, if $S$ is an idempotent generated commutative $R$-algebra that is torsion-free, then nonzero idempotents are faithful, and hence by Theorem~\ref{faithful}, $S$ is a Specker $R$-algebra.
\end{proof}

%Let $R$ be a commutative ring. For $a\in R$ the \emph{annihilator} of $a$ is $\func{ann}(a) := \{r \in R : ra=0\}$, and for $I\subseteq R$ the \emph{annihilator} of $I$ is $\func{ann}(I) := \{r \in R : rs=0 \ \forall s \in I\}$. It is obvious that $\func{ann}(a)$ and $\func{ann}(I)$ are ideals of $R$. The following definition is well known (see, e.g., \cite[p.~260]{Lam99}).

Next we recall the well-known definition of a Baer ring and a weak Baer ring in the case of a commutative ring.

\begin{definition}\label{Baer def}
A commutative ring $R$ is a \emph{Baer ring} if the annihilator ideal of each subset of $R$ is a principal ideal generated by an idempotent, and $R$ is a \emph{weak Baer ring} if the annihilator ideal of each element of $R$ is a principal ideal generated by an idempotent.
\end{definition}

As we noted after Definition~\ref{def of Specker}, we will view $R$ as an $R$-subalgebra of each Specker $R$-algebra $S$.
%We will use this fact several times below.

\begin{theorem}\label{char-of-Baer}
%\label{lem:2.10}
Let $S$ be a Specker $R$-algebra.
\begin{enumerate}
\item $S$ is weak Baer iff $R$ is weak Baer.
\item $S$ is Baer iff $S$ is weak Baer and $\Id(S)$ is a complete Boolean algebra.\end{enumerate}
\end{theorem}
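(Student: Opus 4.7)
The plan is to use the orthogonal decomposition machinery of Section~\ref{Specker} throughout, moving back and forth between annihilators computed in $R$ and in $S$ via a faithful generating algebra $B$ of idempotents.

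For (1) ($\Leftarrow$), given $s \in S$, write $s = \sum_{i=1}^n a_i e_i$ in full orthogonal form using $B$, and use weak Baer on $R$ to choose $f_i \in \Id(R)$ with $\func{ann}_R(a_i) = f_i R$. A preliminary sub-step, which follows from Remark~\ref{uniqueness}.2, is the promotion $\func{ann}_S(a_i) = f_i S$. Then for $t \in S$ the condition $st = 0$ is equivalent (by multiplying by each $e_j$ and using orthogonality) to $a_j e_j t = 0$ for each $j$, hence to $(1 - f_j) e_j t = 0$ for each $j$. Since the $(1-f_j) e_j$ are pairwise orthogonal idempotents, their sum $k = \sum_j (1-f_j) e_j$ is an idempotent, and one checks $\func{ann}_S(s) = \func{ann}_S(k) = (1-k) S$. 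For (1) ($\Rightarrow$), apply weak Baer to $a \in R \subseteq S$ to get $\func{ann}_S(a) = eS$ with $e \in \Id(S)$. By Lemma~\ref{lem:3.2}.1, $e$ has a full orthogonal form $e = \sum_i c_i y_{g_i}$ with $c_i \in \Id(R)$ and $g_i \in B$. Using the equivalence $r \in eS \iff (1-e) r = 0$ together with the faithful decomposition $1-e = \sum_i (1-c_i) y_{g_i}$, one reduces membership in $\func{ann}_R(a) = eS \cap R$ to $(1-c_i) r = 0$ for all $i$; consequently $\func{ann}_R(a) = \bigcap_i c_i R = (\prod_i c_i) R$, generated by the idempotent $\prod_i c_i \in \Id(R)$.

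For (2) ($\Rightarrow$), the Baer property immediately gives weak Baer. For completeness, given $E \subseteq \Id(S)$, apply Baer to $E$ to get $\func{ann}_S(E) = hS$ for some $h \in \Id(S)$. Then $1-h$ is an upper bound for $E$ because $he = 0$ forces $e \le 1-h$; conversely, any idempotent upper bound $k$ of $E$ satisfies $e(1-k) = 0$ for all $e \in E$, so $1-k \in \func{ann}_S(E) = hS$, which unwinds to $(1-h)(1-k) = 0$, i.e., $1-h \le k$. Hence $1-h = \bigvee E$. For (2) ($\Leftarrow$), given $X \subseteq S$, use weak Baer to choose $k_x \in \Id(S)$ with $\func{ann}_S(x) = k_x S$, and form $k = \bigwedge_{x \in X} k_x$ by completeness. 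The inclusion $kS \subseteq \func{ann}_S(X)$ is immediate from $k \le k_x$ and $x k_x = 0$.

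I expect the reverse inclusion $\func{ann}_S(X) \subseteq kS$ to be the main hurdle, since there is no direct way to ``evaluate'' the potentially infinite meet $k$ against a given $t \in \bigcap_x k_x S$. The device is to pivot through weak Baer applied to $t$ itself: writing $\func{ann}_S(t) = gS$ for some idempotent $g$, the relations $(1-k_x) t = 0$ give $1 - k_x \in gS$, i.e., $1 - k_x \le g$ in $\Id(S)$ for every $x \in X$. Then in the complete Boolean algebra $\Id(S)$ De Morgan yields $1 - k = \bigvee_x (1 - k_x) \le g$, so $1 - k \in gS = \func{ann}_S(t)$, giving $(1-k) t = 0$ and therefore $t = kt \in kS$, as desired.
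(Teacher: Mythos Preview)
Your proof is correct. The overall strategy matches the paper's---both arguments work through orthogonal decompositions over a faithful generating algebra $B$ and invoke Lemma~\ref{lem:3.2}.1 and Remark~\ref{uniqueness}.2---but the execution differs in two places worth noting. For (1)~$\Rightarrow$, the paper shows that the idempotent $e$ with $\ann_S(a)=eS$ actually lies in $R$: writing $e=\sum b_i e_i$ with $b_i\in\Id(R)$, it uses $b_j\in eS$ to deduce $b_ib_j=b_j=b_i$, so all $b_i$ coincide and $e=b_1\in R$. You instead compute $eS\cap R=(\prod_i c_i)R$ directly, which is equally valid and a bit more structural. For (2)~$\Leftarrow$, the paper decomposes $t=\sum b_if_i$ over $B$ and argues $f_i\le e_s$ for every $s$, then uses completeness on the $f_i$. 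Your device---applying weak Baer to $t$ itself to produce $g$ with $\ann_S(t)=gS$, then passing $1-k_x\le g$ through De~Morgan---is cleaner: it stays entirely inside $\Id(S)$, never needs to decompose $t$, and sidesteps any question about whether the products $f_ie_s$ lie in $B$. The paper's route is more hands-on with the decomposition machinery; yours isolates the Boolean-algebraic content more sharply.
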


\begin{proof}
(1) Let $B$ be a faithful generating algebra of idempotents for $S$. Suppose that $S$ is weak Baer and let $a \in R$. Then there is $e \in \Id(S)$ with $\ann_S(a) = eS$. By Lemma~\ref{lem:3.2}.1 and Theorem~\ref{new def}, we may write $e = \sum b_ie_i$ in full orthogonal form with $b_i \in \Id(R)$ and the $e_i \in B$. Since $0 = ae = \sum (ab_i)e_i$, by Remark~\ref{uniqueness}.2 we see that $ab_i = 0$ for all $i$. Therefore, $b_j \in eS$ for each $j$, hence $b_j = es$ for some $s \in S$. Then $eb_j = e(es) = es = b_j$. The equation $eb_j = b_j$ yields $\sum (b_i b_j)e_i = b_j = \sum b_j e_i$ since $\sum e_i = \bigvee e_i = 1$. Remark~\ref{uniqueness}.2 yields $b_ib_j = b_j$. Applying the same argument to $eb_i = b_i$ gives $b_i b_j = b_i$, so $b_i = b_j$ for each $i,j$. Thus, $e = \sum b_i e_i = b_1 \sum e_i = b_1$. Consequently, $e = b_1 \in R$. From this it follows that $\ann_R(a) = b_1R$ is generated by the idempotent $b_1$, so $R$ is weak Baer.

Conversely, suppose that $R$ is weak Baer. Let $s \in S$ and write $s = \sum a_i e_i$ in full orthogonal form with the $a_i \in R$ distinct and the $e_i\in B$. Since $R$ is weak Baer, $\ann_R(a_i) = b_iR$ for some idempotent $b_i \in R$.  Let $e = \sum b_i e_i$. By Lemma~\ref{lem:3.2}.1, $e$ is an idempotent in $S$. We claim that $\ann_S(s) = eS$. We have $es = \left(\sum b_ie_i\right)\left(\sum a_i e_i\right) = \sum (b_ia_i)e_i = 0$ because the $e_i$ are orthogonal and the $b_i$ annihilate the $a_i$. So $eS \subseteq \ann_S(s)$. To prove the reverse inclusion, we first show that if $b \in R$ and $g \in \Id(S)$ with $bg \in \ann_S(s)$, then $bg \in eS$. If $bgs = 0$, then $\sum (ba_i)(e_ig) = 0$. Thus, by Remark~\ref{uniqueness}.2, for each $i$ with $e_ig \ne 0$ we have $ba_i = 0$. When this occurs, $b \in b_iR$, so $b = bb_i$. Consequently,
\begin{eqnarray*}
e(bg) &=&\left(\sum b_i e_i\right)bg = \sum (b_ib)(e_ig) = \sum b(e_ig) = \left(\sum e_i \right)bg \\
&=& 1\cdot bg  = bg.
\end{eqnarray*}
Thus, $bg \in eS$. In general, if $t \in \ann_S(s)$, write $t = \sum c_j f_j$ in orthogonal form. Then each $c_j f_j = tf_j \in \ann_S(s)$. By the previous argument, each $c_j f_j \in eS$, so $t \in eS$. This proves that $\ann_S(s) = eS$, so $S$ is weak Baer.

(2) First suppose that $S$ is weak Baer and $\Id(S)$ is complete, and let $I \subseteq S$. Then $\func{ann}_S(I) = \bigcap_{s \in I} \func{ann}_S(s)$. Since $S$ is weak Baer, there is $e_s \in \func{Id}(S)$ with $\func{ann}_S(s) = e_sS$. Consequently, $\func{ann}_S(I) = \bigcap_{s \in I} e_sS$. Let $e = \bigwedge e_s$. We show that $\func{ann}_S(I)= eS$. Since $e\le e_s$ for each $s$, we have $ee_s = e$, so $e \in \bigcap e_sS = \func{ann}_S(I)$. Conversely, let $t \in \func{ann}_S(I)$. Then $ts = 0$ for all $s \in I$, so $t \in e_sS$ for each $s$, which yields $te_s = t$. Let $t = \sum b_i f_i$ be the full orthogonal decomposition of $t$ with the $b_i\in R$ distinct and the $f_i \in B$. Then $te_s = t$ yields $\sum b_i f_ie_s = \sum b_if_i$. By Remark~\ref{uniqueness}.2, $f_i e_s = f_i$, so $f_i \le e_s$ for each $s$. Therefore, $f_i \le e$, so $f_ie =f_i$. Since this is true for all $i$, we have $te = t$. This yields $t \in eS$. Thus, $\func{ann}_S(I) = eS$, and so $S$ is Baer.

Next suppose that $S$ is Baer. Then $S$ is weak Baer. Let $\{e_i : i \in I\}$ be a family of idempotents of $S$. Set $K = \{1-e_i : i \in I\}$. Then $\func{ann}_S(1-e_i) = e_iS$, so $\func{ann}_S(K) = \bigcap e_iS$. Since $S$ is Baer, $\func{ann}_S(K) = eS$ for some $e \in \func{Id}(S)$. We show that $e=\bigwedge e_i$. First, as $e \in \func{ann}_S(K)$, we have $ee_i = e$, so $e \le e_i$. Thus, $e$ is a lower bound of the $e_i$. Next, let $f \in \func{Id}(S)$ be a lower bound of the $e_i$. Then $fe_i = f$, so $(1-e_i)f = 0$. Therefore, $f \in \func{ann}_S(K) = eS$. This implies that $fe = f$, so $f \le e$. Thus, $e = \bigwedge_i e_i$. Consequently, $\Id(S)$ is a complete Boolean algebra.
\end{proof}

\begin{corollary}\label{cor:4.4}
Let $S$ be a Specker $R$-algebra.
\begin{enumerate}
\item If $R$ is indecomposable, then $S$ is Baer iff $R$ is a domain and $\Id(S)$ is a complete Boolean algebra.
\item If $R$ is a domain, then $S$ is a weak Baer ring.
\end{enumerate}
\end{corollary}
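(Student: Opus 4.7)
The plan is to derive both statements as direct consequences of Theorem~\ref{char-of-Baer}, with the only additional input being the elementary observation that a commutative ring is a domain if and only if it is an indecomposable weak Baer ring.

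For part (2), I would argue that any domain $R$ is trivially weak Baer: for $a \in R$ nonzero, $\ann_R(a) = 0 = 0 \cdot R$, and $\ann_R(0) = R = 1 \cdot R$, so every annihilator of a single element is generated by an idempotent ($0$ or $1$). Now Theorem~\ref{char-of-Baer}(1) immediately gives that $S$ is weak Baer.

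For part (1), the backward direction is equally short: if $R$ is a domain, then as above $R$ is weak Baer, so by Theorem~\ref{char-of-Baer}(1), $S$ is weak Baer. Combining with the assumption that $\Id(S)$ is complete and applying Theorem~\ref{char-of-Baer}(2) yields that $S$ is Baer. For the forward direction, assume $R$ is indecomposable and $S$ is Baer. Theorem~\ref{char-of-Baer}(2) gives that $S$ is weak Baer and $\Id(S)$ is complete, and Theorem~\ref{char-of-Baer}(1) then tells us $R$ is weak Baer. It remains to upgrade \textit{indecomposable + weak Baer} to \textit{domain}: for any $a \in R$, $\ann_R(a) = eR$ for some $e \in \Id(R) = \{0,1\}$, so $\ann_R(a) \in \{0, R\}$. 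If $a \ne 0$, then $1 \notin \ann_R(a)$, forcing $\ann_R(a) = 0$; this says $R$ has no nonzero zero divisors, so $R$ is a domain.

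There is no real obstacle here — the corollary is essentially a bookkeeping exercise assembling Theorem~\ref{char-of-Baer}(1) and Theorem~\ref{char-of-Baer}(2) together with the indecomposability hypothesis. The only small point to notice is that indecomposability is exactly what converts the weak Baer conclusion for $R$ into the domain conclusion, which is what makes part (1) a genuine strengthening of Theorem~\ref{char-of-Baer}(2) in the indecomposable setting.
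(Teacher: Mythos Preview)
Your proof is correct and follows essentially the same approach as the paper: both derive the corollary directly from Theorem~\ref{char-of-Baer}(1) and (2), together with the elementary observation that an indecomposable weak Baer ring is a domain (and conversely any domain is weak Baer). Your treatment of the forward direction in part (1) is in fact slightly more careful than the paper's, which tacitly assumes $a \ne 0$ when asserting $\ann_R(a) = 0$.
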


\begin{proof}
(1) By Theorem~\ref{char-of-Baer}.2, $S$ is Baer iff $S$ is weak Baer and $\Id(S)$ is complete. By Theorem~\ref{char-of-Baer}.1, $S$ is weak Baer iff $R$ is weak Baer. Now, since $R$ is indecomposable, the only idempotents are $0,1$, so if $R$ is weak Baer, then $\func{ann}_R(a) = 0$ for each $a \in R$, which means each nonzero element is a non-zero divisor, so $R$ is a domain. Conversely, if $R$ is a domain, then trivially $R$ is Baer. Thus, (1) follows.

(2) This follows from Theorem~\ref{char-of-Baer}, since a domain is a Baer ring.
\end{proof}

Next we show that when $R$ is a domain, then $S_*$ is also homeomorphic to the space $\Min(S)$ of minimal prime ideals of $S$ with the subspace  topology inherited from the Zariski topology on the prime spectrum of $S$. Therefore, the closed sets of $\Min(S)$ are the sets of the form $Z(I)=\{P\in\Min(S):I\subseteq P\}$ for some ideal $I$ of $S$.
%This is a consequence of the following lemma, which relies on the fact, established in Corollary~\ref{cor:4.4}, that when $R$ is a domain, then a Specker $R$-algebra is a weak Baer ring.

\begin{lemma}\label{min primes}
When $R$ is a domain, the following are equivalent for a prime ideal $P$ of a Specker $R$-algebra $S$.
\begin{enumerate}
\item $P$ is a minimal prime ideal of $S$.
\item $P \cap R = 0$.
\item Every element of $P$ is a zero divisor.
\end{enumerate}
\end{lemma}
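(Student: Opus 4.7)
My plan is to establish the implications (1) $\Rightarrow$ (3) $\Rightarrow$ (2) $\Rightarrow$ (1). The first implication is a standard commutative algebra fact; the second exploits that $S$ is torsion-free over the domain $R$ by Proposition~\ref{tfree char}; and the third, which is the main obstacle, leverages the presentation $S \cong R[B]$ supplied by Theorem~\ref{new def}.

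For (1) $\Rightarrow$ (3), I would invoke the familiar multiplicative-set argument. Fix $a \in P$ and let $T = \{a^n t : n \ge 0,\ t \in S \setminus P\}$; this set is multiplicatively closed and contains $1$. If $0 \notin T$, Krull's theorem produces a prime ideal of $S$ disjoint from $T$ and hence strictly contained in $P$, contradicting minimality. So $a^n t = 0$ for some minimal $n \ge 1$ and some $t \notin P$, and then $b := a^{n-1}t$ is nonzero (by minimality of $n$) and satisfies $ab = 0$.

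For (3) $\Rightarrow$ (2), Proposition~\ref{tfree char} shows that $S$ is torsion-free as an $R$-module, so every nonzero element of $R$ is a non-zero-divisor in $S$. Any $a \in P \cap R$ is a zero divisor by (3), hence $a = 0$.

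For the main step (2) $\Rightarrow$ (1), assume $P \cap R = 0$, fix a faithful generating algebra of idempotents $B$ of $S$, and let $Q \subseteq P$ be any prime of $S$; I will show $Q = P$. The first observation is that for every prime $\mathfrak{q}$ of $S$, the set $\mathfrak{q} \cap B$ is a maximal ideal of the Boolean algebra $B$: the relation $e(1-e) = 0 \in \mathfrak{q}$ places exactly one of $e, 1-e$ in $\mathfrak{q}$ for each $e \in B$. Since $Q \cap B \subseteq P \cap B$ are both maximal, they coincide. The second observation is that $P \cap R = 0$ forces $P$ to lie inside the $S$-ideal generated by $P \cap B$: given $s \in P$ in full orthogonal form $s = \sum a_i e_i$ with $\sum e_i = 1$, the ultrafilter $B \setminus P$ meets the partition $\{e_i\}$ of $1$ in exactly one element $e_k$; then $a_k e_k = s e_k \in P$ together with $e_k \notin P$ and primeness yields $a_k \in P \cap R = 0$, so $s = \sum_{i \ne k} a_i e_i$, where each $e_i \in P \cap B = Q \cap B \subseteq Q$, and hence $s \in Q$. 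Thus $P \subseteq Q$, so $P = Q$ and $P$ is minimal. The technical crux is aligning the Boolean-algebraic datum $P \cap B$ with the ring-theoretic datum $P \cap R$ by means of the orthogonal decomposition machinery of Lemma~\ref{lem:2.1}.
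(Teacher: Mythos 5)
Your proof is correct, and the cycle you chose, (1) $\Rightarrow$ (3) $\Rightarrow$ (2) $\Rightarrow$ (1), genuinely differs from the paper's, which runs (1) $\Rightarrow$ (2) $\Rightarrow$ (3) $\Rightarrow$ (1). The overlap is in the easy parts: your (1) $\Rightarrow$ (3) is the standard ``elements of a minimal prime are zero divisors'' fact that the paper simply cites from Huckaba--Keller, and your (3) $\Rightarrow$ (2) is the same torsion-freeness argument the paper uses inside its (1) $\Rightarrow$ (2). The real divergence is in closing the cycle. The paper's (3) $\Rightarrow$ (1) leans on external machinery: it cites a lemma about weak Baer rings, which in turn requires knowing that a Specker algebra over a domain is weak Baer (Corollary~\ref{cor:4.4}.2, resting on Theorem~\ref{char-of-Baer}). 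Your (2) $\Rightarrow$ (1) replaces all of that with a direct, self-contained argument: for any prime $\mathfrak{q}$ the trace $\mathfrak{q}\cap B$ is a maximal ideal of $B$, so a prime $Q\subseteq P$ has $Q\cap B = P\cap B$; and $P\cap R=0$ forces every $s\in P$, written in full orthogonal form, to have its one ``surviving'' coefficient $a_k$ vanish, pushing $s$ into the ideal generated by $P\cap B\subseteq Q$. This buys independence from the Baer-ring section and from the literature citation, at the cost of redoing by hand the localization argument the paper outsources; the paper's route, conversely, exhibits the lemma as a corollary of structural facts (weak Baerness) it needs anyway. One cosmetic point: $\mathfrak{q}\cap B$ being a maximal \emph{Boolean} ideal needs the (easy) check that it is closed under $\vee$ and downward closed, not just the exactly-one-of-$e,\lnot e$ dichotomy you state; but for your argument the dichotomy alone already gives $Q\cap B = P\cap B$, so nothing is lost.
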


\begin{proof}
(1) $\Rightarrow$ (2): Suppose $P$ is a minimal prime ideal of $S$. Then every element of $P$ is a zero divisor in $S$ (see, e.g., \cite[Cor.~1.2]{HJ65}). Thus, if $a \in P \cap R$, then there exists $0 \ne s \in S$ such that $as = 0$. But by Proposition~\ref{tfree char}, $S$ is a torsion-free $R$-module, so necessarily $a =0$, and hence $P \cap R = 0$.

(2) $\Rightarrow$ (3): Let $s \in P$ be nonzero and write $s = \sum a_ie_i$ in orthogonal form with the $a_i$ distinct and nonzero. Then $a_ie_i = se_i \in P$ for each $i$, so since $P \cap R = 0$, it must be that $e_i \in P$. Thus, $1 \ne \sum e_i$, and hence since $(1-\sum e_i)s=0$, we see that $s$ is a zero divisor in $S$.

(3) $\Rightarrow$ (1): This is a general fact about weak Baer rings; see \cite[Lem.\ 3.8]{HJ65}.
\end{proof}

\begin{theorem}\label{min remark}
If $R$ is a domain and $S$ is a Specker $R$-algebra, then $S_*$ is homeomorphic to $\Min(S)$.
\end{theorem}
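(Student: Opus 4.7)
The plan is to define $\psi:S_*\to\Min(S)$ by $\psi(\alpha)=\ker(\alpha)$ and show it is a homeomorphism. First, $\psi$ is well-defined: since $\alpha$ is an $R$-algebra homomorphism, it restricts to the identity on $R$, so $\ker(\alpha)\cap R=0$, and since $S/\ker(\alpha)$ embeds into the domain $R$, $\ker(\alpha)$ is prime. Lemma~\ref{min primes} then gives $\ker(\alpha)\in\Min(S)$.

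Next I would verify bijectivity. For injectivity, since $R$ is indecomposable (being a domain), for each $e\in\Id(S)$ we have $\alpha(e)\in\Id(R)=\{0,1\}$, with $\alpha(e)=0$ exactly when $e\in\ker(\alpha)$. Hence any two $\alpha,\beta\in S_*$ with the same kernel agree on the generating set $\Id(S)$, so are equal. Surjectivity is the main obstacle: given $P\in\Min(S)$, the quotient $S/P$ is a domain, hence indecomposable, so its only idempotents are $0$ and $1$. Since $S$ is idempotent generated over $R$, the composite $R\to S\to S/P$ is surjective; and Lemma~\ref{min primes} yields $P\cap R=0$, so this composite is also injective. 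Composing the inverse of the resulting isomorphism with the quotient $S\to S/P$ produces $\alpha\in S_*$ with $\ker(\alpha)=P$.

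For continuity, a subbasic closed set of $\Min(S)$ has the form $\{P\in\Min(S):s\in P\}$, and its preimage under $\psi$ is precisely $U_s$. By the proof of Proposition~\ref{new top}, $U_s=U_{e_1}\cap\cdots\cap U_{e_n}$, where the $e_i$ are the idempotents in a full orthogonal decomposition of $s$, and each $U_{e_i}$ is clopen in $S_*$ (corresponding to the clopen $Z(e_i)$ in the Stone space of $\Id(S)$). Hence $U_s$ is closed in $S_*$, so $\psi$ is continuous.

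Finally, $S_*$ is compact, being a Stone space by Proposition~\ref{new top}, and $\Min(S)$ is Hausdorff: by Corollary~\ref{cor:4.4}(2), $S$ is a weak Baer ring, and for distinct $P_1,P_2\in\Min(S)$, choosing $s\in P_1\setminus P_2$ and writing $\ann_S(s)=eS$, minimality of $P_1$ forces $e\notin P_1$ (otherwise $\ann_S(s)\subseteq P_1$, contradicting the standard characterization of minimality), while $se=0\in P_2$ together with $s\notin P_2$ forces $e\in P_2$; then the disjoint clopen sets $\{P\in\Min(S):e\in P\}$ and $\{P\in\Min(S):e\notin P\}$ separate $P_1$ from $P_2$. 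A continuous bijection from a compact space to a Hausdorff space is automatically a homeomorphism, completing the argument.
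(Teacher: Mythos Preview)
Your argument is correct and matches the paper's proof in setting up the bijection $\alpha\mapsto\ker(\alpha)$: both show well-definedness via Lemma~\ref{min primes}, and surjectivity by observing that for $P\in\Min(S)$ the composite $R\to S/P$ is an isomorphism because $S/P$, being a domain, has only trivial idempotents. Your explicit injectivity argument is a small addition the paper leaves implicit.

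The genuine difference is in the final step. The paper verifies continuity of the inverse directly: since $\varphi$ is onto, $\varphi(U_s)=Z(s)$, so the subbasic open sets of $S_*$ map to (cl)open sets of $\Min(S)$. You instead show only that $\psi$ is continuous and then invoke the compact--Hausdorff criterion, which requires the extra verification that $\Min(S)$ is Hausdorff via the weak Baer property (Corollary~\ref{cor:4.4}) and the minimal-prime characterization $s\in P\Rightarrow\ann_S(s)\not\subseteq P$ (valid here since weak Baer rings are reduced). Both routes work; the paper's is shorter, while yours recovers Hausdorffness of $\Min(S)$ as a byproduct.

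One small slip: when you cite the proof of Proposition~\ref{new top} for $U_s=U_{e_1}\cap\cdots\cap U_{e_n}$, the decomposition used there is orthogonal with \emph{nonzero} coefficients, not full orthogonal. If a coefficient $a_k=0$ appears in a full decomposition, then $U_{e_k}$ should not be intersected in, since $\alpha(e_k)=1$ is compatible with $\alpha(s)=0$. This does not affect the conclusion that $U_s$ is clopen.
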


\begin{proof}
By Proposition~\ref{new top}, we identify $S_*$ with $\Hom_R(S,R)$. If $\alpha \in S_*$, then as $R$ is a domain, $P:=\ker(\alpha)$ is a prime ideal. Moreover, $P\cap R=0$ since $a\in R$ implies $\alpha(a)=a$.
%$\alpha(a) = a\alpha(1) = a\cdot 1 = a\in R$, so $\alpha(a) = 0$ implies $a=0$.
Consequently, by Lemma~\ref{min primes}, $P$ is a minimal prime ideal of $S$. Conversely, if $P$ is a minimal prime ideal of $S$, then consider the canonical $R$-algebra homomorphism $R \to S \to S/P$. By Lemma~\ref{min primes}, $R \cap P = 0$, so this homomorphism is 1-1.
%Let $B$ be a faithful generating algebra of idempotents of $S$. Then $S/P$ is generated over $R$ by the idempotents $e+P$ for $e \in B$
To see that it is onto observe that since $S/P$ is a domain, $e+P = 0+P,1+P$ for each idempotent $e\in S$. Therefore, $S/P$ is generated over $R$ by $1+P$, and so the homomorphism $R \to S/P$ is onto. Thus, there is $\alpha \in S_*$ with $P = \ker(\alpha)$. This shows that there is a bijection $\varphi : S_* \to \Min(S)$, given by $\varphi(\alpha) = \ker(\alpha)$. To see that $\varphi$ is continuous, if $I$ is an ideal of $S$, then
\[
\varphi^{-1}(Z(I)) = \varphi^{-1}\left( \bigcap_{s \in I} Z(s)\right) = \bigcap_{s \in I} \varphi^{-1}(Z(s)) = \bigcap_{s \in I} U_s,
\]
where the last equality follows from the proof of Proposition~\ref{new top}. It also follows from the proof of Proposition~\ref{new top} that if $s = \sum a_i e_i$ is in orthogonal form, then $U_s = U_{e_1}\cap\dots\cap U_{e_n}$. Because $U_e  = S_* - U_{\lnot e}$ for each $e \in \Id(S)$, we see that each $U_e$ is clopen, and so $U_s$ is clopen. Therefore, the equation above shows that $\varphi$ is continuous. In addition, because $\varphi$ is onto, we have
%$\varphi(U_{s_1,\dots,s_n}) = \bigcap Z(s_i)$,
\begin{eqnarray*}
\varphi(U_s) &=& \varphi\left(\{ \alpha\in S_* : \alpha(s) = 0\}\right) = \varphi\left(\{ \alpha\in S_*: s \in \ker(\alpha)\}\right) \\
%&=& \{ \varphi(\alpha) \in \Min(S) : \alpha(s) = 0\} = \{ \ker(\alpha) \in \Min(S) : s \in \ker(\alpha) \} \\
&=& \{ P \in \Min(S) : s \in P \} = Z(s).
\end{eqnarray*}
Thus, $\varphi^{-1}$ is continuous, so $\varphi$ is a homeomorphism.
%then the image of every idempotent of $S$ in $S/P$ is either $0$ or $1$, and it follows that $P$ induces an $R$-algebra homomorphism $S \rightarrow R$ that is lifted from a Boolean algebra homomorphism $\Id(S) \rightarrow {\bf 2}$. As follows from Corollary~\ref{cor:4.4}, $S$ is a weak Baer ring. {\bf Is the rest of the argument needed?} It also has no nonzero idempotents: If $0 \ne s\in S$ is in orthogonal form $s=\sum a_ie_i$, then an easy induction shows that $s^n=\sum a_i^n e_i \ne 0$. Thus, by \cite[Thm.~1]{Kis74}, $S_*$ is homeomorphic to the Stone space $\Max(\Id(S))$ of maximal ideals of the Boolean algebra $\Id(S)$ of idempotents of $S$ via the mapping $P \mapsto P \cap \Id(S)$. From all these observations it follows that $S_*$ is homeomorphic to $\Min(S)$.
\end{proof}

The equality $Z(s) = \varphi(U_s)$ in the proof above shows that $Z(s)$ is clopen in $\Min(S)$ for each $s \in S$. This contrasts the case of the prime spectrum of $S$, where $Z(s)$ is clopen iff $s$ is an idempotent.

Let $\mathbf{BSp}_R$ be the full subcategory of $\mathbf{Sp}_R$ consisting of Baer Specker $R$-algebras, let $\mathbf{cBA}$ be the full subcategory of {\bf BA} consisting of complete Boolean algebras, and let {\bf ED} be the full subcategory of {\bf Stone} consisting of extremally disconnected spaces.
%The following theorem is an immediate consequence of Theorem~\ref{thm:3.4} and Theorem~\ref{char-of-Baer}.

\begin{theorem}\label{thm:2.12}
\begin{enumerate}
\item[]
\item When $R$ is a domain, the categories $\mathbf{BSp}_R$ and $\mathbf{cBA}$ are equivalent.
%the restrictions of the functors $\mathcal I$ and $\mathcal S$ yield an equivalence of the categories $\mathbf{BSp}_R$ and $\mathbf{cBA}$.
\item When $R$ is a domain, the categories ${\bf BSp}_R$ and {\bf ED} are dually equivalent.
\end{enumerate}
\end{theorem}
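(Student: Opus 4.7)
The plan is to obtain (1) by restricting the categorical equivalence between $\mathbf{Sp}_R$ and $\mathbf{BA}$ from Theorem~\ref{thm:3.4} to the full subcategories $\mathbf{BSp}_R$ and $\mathbf{cBA}$, and then to obtain (2) by composing the resulting equivalence with the classical restriction of Stone duality. Since a domain is indecomposable, Theorem~\ref{thm:3.4} delivers the ambient equivalence $\mathcal{S}\dashv\mathcal{I}$; what really needs checking is that this equivalence matches the property ``Baer'' on the algebra side with ``complete'' on the Boolean side.

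For (1), I would first verify that $\mathcal{I}$ restricts to a functor $\mathbf{BSp}_R\to\mathbf{cBA}$: if $S\in\mathbf{BSp}_R$, then by Theorem~\ref{char-of-Baer}(2), $\mathcal{I}(S)=\Id(S)$ is a complete Boolean algebra. Next I would verify that $\mathcal{S}$ restricts to a functor $\mathbf{cBA}\to\mathbf{BSp}_R$: given a complete Boolean algebra $B$, Corollary~\ref{cor:4.4}(2) (using that $R$ is a domain) shows that $\mathcal{S}(B)=R[B]$ is weak Baer, while Lemma~\ref{lem:3.2}(4) (using that $R$ is indecomposable) gives $\Id(R[B])\cong B$, which is complete. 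A second appeal to Theorem~\ref{char-of-Baer}(2) then forces $R[B]$ to be Baer. Because $\mathbf{BSp}_R$ and $\mathbf{cBA}$ are full subcategories, the natural isomorphisms $\mathcal{I}\circ\mathcal{S}\cong 1_{\mathbf{BA}}$ and $\mathcal{S}\circ\mathcal{I}\cong 1_{\mathbf{Sp}_R}$ from Theorem~\ref{thm:3.4} restrict to natural isomorphisms between the restricted functors, yielding the desired equivalence of $\mathbf{BSp}_R$ and $\mathbf{cBA}$.

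For (2), I would invoke the classical refinement of Stone duality which asserts that a Boolean algebra is complete iff its Stone space is extremally disconnected; thus Stone duality restricts to a dual equivalence between $\mathbf{cBA}$ and $\mathbf{ED}$. Composing this with the equivalence from (1) yields the claimed dual equivalence between $\mathbf{BSp}_R$ and $\mathbf{ED}$. No genuine obstacle arises in this argument: the heart of the matter is the alignment of ``Baer'' and ``complete'' under the adjoint pair $\mathcal{S}\dashv\mathcal{I}$, and this has already been packaged in Theorem~\ref{char-of-Baer}(2) and Corollary~\ref{cor:4.4}(2), so the rest is categorical bookkeeping.
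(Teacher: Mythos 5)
Your proposal is correct and follows essentially the same route as the paper: the paper's own proof of (1) invokes Corollary~\ref{cor:4.4} (which packages exactly the alignment ``$S$ Baer $\Leftrightarrow$ $\Id(S)$ complete'' for a domain $R$) and then restricts the equivalence of Theorem~\ref{thm:3.4}, while (2) is obtained by composing with the restriction of Stone duality to $\mathbf{cBA}$ and $\mathbf{ED}$. Your slightly more explicit unwinding of why $R[B]$ is Baer for complete $B$ (via Theorem~\ref{char-of-Baer}(2), Corollary~\ref{cor:4.4}(2), and Lemma~\ref{lem:3.2}(4)) is just a re-derivation of what Corollary~\ref{cor:4.4}(1) already states.
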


\begin{proof}
(1) By Corollary~\ref{cor:4.4}, when $R$ is a domain, a Specker $R$-algebra is a Baer ring iff $\Id(S)$ is a complete Boolean algebra. Now apply Theorem~\ref{thm:3.4} to obtain that the restrictions of the functors $\mathcal I$ and $\mathcal S$ yield an equivalence of $\mathbf{BSp}_R$ and $\mathbf{cBA}$.

(2)
%By Proposition \ref{char-of-Baer}, $S$ is a Baer Specker $R$-algebra iff $\Id(S)$ is a complete Boolean algebra. Since $S_*$ is homeomorphic to the Stone space of $\Id(S)$,
Stone duality yields that the restriction of $(-)_*$ to ${\bf BSp}_R$ lands in {\bf ED}. When $R$ is a domain, $\Id(X^*)$ consists of the characteristic functions of clopen subsets of $X$. Stone duality and Corollary~\ref{cor:4.4} then yield that the restriction of $(-)^*$ to {\bf ED} lands in ${\bf BSp}_R$. Now apply Corollary~\ref{cor:3.5} to conclude that the restrictions of $(-)_*$ and $(-)^*$ yield a dual equivalence of ${\bf BSp}_R$ and {\bf ED}.
\end{proof}

%{\color{red}\bf Here is a proof that we must have a domain to get equivalence.}
%
%Suppose the functor $B \mapsto R[B]$ sends objects in {\bf cBA} to ${\bf Sp}_R$. Then, for $B = 2$, we see that $R \cong R[2]$ is Baer. In addition, $2 \cong \Id(R[2]) \cong \Id(R)$. Thus, $R$ is indecomposable. Thus, by Lemma \ref{char-of-Baer}, $R$ is a domain.

Since injectives in {\bf BA} are exactly the complete Boolean algebras, as an immediate consequence of Theorem~\ref{thm:2.12}, we obtain:

\begin{corollary}\label{cor:2.13}
When $R$ is a domain, the injective objects in $\mathbf{Sp}_R$ are the Baer Specker $R$-algebras.
\end{corollary}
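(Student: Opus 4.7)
The plan is to transport the classical characterization of injective Boolean algebras across the category equivalence already established in the paper. Since a domain is indecomposable, Theorem~\ref{thm:3.4} gives that $\mathcal{I}:\mathbf{Sp}_R \to \mathbf{BA}$ and $\mathcal{S}:\mathbf{BA}\to\mathbf{Sp}_R$ are mutually quasi-inverse functors. Injectivity is a purely categorical property (``every monomorphism with codomain $S$ splits''), and any equivalence preserves and reflects monomorphisms and split monomorphisms; hence an equivalence preserves and reflects injective objects. Applied to our situation, this means $S\in\mathbf{Sp}_R$ is injective iff $\mathcal{I}(S)=\Id(S)$ is injective in $\mathbf{BA}$.

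Next I would invoke the Sikorski extension theorem, the classical fact (already quoted in the paragraph preceding the corollary) that the injective objects of $\mathbf{BA}$ are precisely the complete Boolean algebras. Combining this with the preceding paragraph, $S$ is injective in $\mathbf{Sp}_R$ iff $\Id(S)$ is a complete Boolean algebra.

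Finally, I would translate the condition ``$\Id(S)$ complete'' back into a ring-theoretic condition on $S$. By Corollary~\ref{cor:4.4}.2, every Specker $R$-algebra over the domain $R$ is already weak Baer, so Theorem~\ref{char-of-Baer}.2 collapses to: $S$ is Baer iff $\Id(S)$ is a complete Boolean algebra. Chaining the three equivalences---injective in $\mathbf{Sp}_R$ $\Leftrightarrow$ $\Id(S)$ injective in $\mathbf{BA}$ $\Leftrightarrow$ $\Id(S)$ complete $\Leftrightarrow$ $S$ Baer---yields the corollary.

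There is no real obstacle here; everything is formal once the category equivalence of Theorem~\ref{thm:3.4} and the Baer characterization of Corollary~\ref{cor:4.4} are in hand. The only point requiring a sentence of justification is that injectivity is transported by categorical equivalence, which is folklore but worth stating explicitly so the reader can see why the word ``immediate'' in the paper is accurate.
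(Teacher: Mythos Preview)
Your argument is correct and is precisely the paper's approach, spelled out in more detail: the paper derives the corollary from Theorem~\ref{thm:2.12} (itself built from Theorem~\ref{thm:3.4} and Corollary~\ref{cor:4.4}) together with Sikorski's theorem, and you have simply unpacked that chain.

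One minor slip worth fixing: your parenthetical gloss ``every monomorphism with codomain $S$ splits'' is not a correct characterization of injectivity---for instance, $\mathbb{Z}\hookrightarrow\mathbb{Q}$ is a nonsplit monomorphism in the category of abelian groups even though $\mathbb{Q}$ is injective there. The reason equivalences transport injectivity is rather that the standard lifting-property definition (for every monomorphism $A\hookrightarrow B$, every map $A\to S$ extends to $B$) is phrased purely in terms of morphisms and monomorphisms, both of which an equivalence preserves and reflects. This does not affect the substance of your proof.
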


\begin{remark}
In fact, when $R$ is a domain, each $S\in{\bf Sp}_R$ has the injective hull in ${\bf Sp}_R$, which can be constructed as follows. Let $\func{DM}(\Id(S))$ be the Dedekind-MacNeille completion of the Boolean algebra $\Id(S)$. Then, by \cite[Prop.~3]{BB67} and Theorem~\ref{thm:3.4}, $R[\func{DM}(\Id(S))]$ is the injective hull of $S$ in ${\bf Sp}_R$.
\end{remark}

\section{Specker algebras over a totally ordered ring}\label{sec:4}

Recall (see, e.g., \cite[Ch.~XVII]{Bir79}) that a ring $R$ with a partial order $\le$ is an \emph{$\ell$-ring} (lattice-ordered ring) if (i)~$(R,\le)$ is a lattice, (ii)~$a \le b$ implies $a + c \le b + c$ for each $c$, and (iii)~$0 \le a, b$ implies $0 \le ab$. An $\ell$-ring $R$ is \emph{totally ordered} if the order on $R$ is a total order, and it is an \emph{$f$-ring} if it is a subdirect product of totally ordered rings. It is well known (see, e.g., \cite[Ch.~XVII, corollary to Thm.~8]{Bir79}) that an $\ell$-ring $R$ is an $f$-ring iff for each $a, b, c \in R$ with $a \wedge b = 0$ and $c \ge 0$, we have $ac \wedge b = 0$.

In this final section we consider the case when $R$ is a totally ordered ring. Our motivation for considering Specker algebras over totally ordered rings stems from the case when $R = {\mathbb{Z}}$ as treated by Ribenboim \cite{Rib69} and Conrad \cite{Con74}, and the case $R = {\mathbb{R}}$ studied in \cite{BMO13a}. These approaches all have in common a lifting of the order from the totally ordered ring to what is a fortiori a Specker $R$-algebra, and in all three cases the lift produces the same order. We show in  Theorem~\ref{order prop} that when $R$ is totally ordered, then there is a unique partial order on a Specker $R$-algebra that makes it into an $f$-algebra over $R$.

We start by noting that each totally ordered ring $R$ is indecomposable. To see this, we first note that if $a \in R$, then $a^2 \ge 0$, since if $a \ge 0$, then $a^2 \ge 0$, and if $a \le 0$, then $-a \ge 0$, so $a^2 = (-a)^2 \ge 0$. Now, let $e \in R$ be idempotent. Then $0 \le e$ since $e = e^2$. Now, either $e \le 1-e$ or vice-versa. If $e \le 1-e$, then multiplying by $e$ yields $e^2 \le 0$, which forces $e=0$. On the other hand, if $1-e \le e$, then multiplying by $1-e$, which is nonnegative since it is an idempotent, we get $1-e \le 0$. Like before this forces $1-e = 0$, so $e = 1$. Thus, $\Id(R) = \{0,1\}$.

Let $(S,\le)$ be a partially ordered $R$-algebra. We call $S$ an \emph{$\ell$-algebra over $R$} if $S$ is both an $\ell$-ring and an $R$-algebra such that whenever $0\le s \in S$ and $0 \le a \in R$, then $as \ge 0$. Furthermore, we call $S$ an \emph{$f$-algebra over $R$} if $S$ is both an $\ell$-algebra over $R$ and an $f$-ring.

\begin{theorem}\label{order prop}
Let $R$ be totally ordered and let $S$ be a Specker $R$-algebra. Then there is a unique partial order on $S$ for which $(S,\le)$ is an $f$-algebra over $R$.
\end{theorem}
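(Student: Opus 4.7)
My plan separates existence from uniqueness. For existence, I would use Theorem~\ref{new def} to identify $S$ with a Boolean power $C(X, R_{\func{disc}})$ for a Stone space $X$ and endow it with the pointwise order inherited from $R^X$. Since $R$ is totally ordered, hence an $f$-ring, the product $R^X$ is an $f$-ring, and scalar multiplication by a non-negative element of $R$ preserves pointwise non-negativity, so $R^X$ is an $f$-algebra over $R$. The subring $C(X, R_{\func{disc}})$ is closed under the pointwise lattice operations, because every $f, g \in C(X, R_{\func{disc}})$ have finite image with clopen fibers, and on the common refinement of the induced clopen partitions both $f \vee g$ and $f \wedge g$ are locally constant with clopen fibers. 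Thus $C(X, R_{\func{disc}})$, and via the isomorphism $S$ itself, inherits the structure of an $f$-algebra over $R$.

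For uniqueness, suppose $\leq$ is any partial order on $S$ making it an $f$-algebra over $R$. I would first observe that every idempotent $e \in \Id(S)$ is non-negative, since $e = e^2$ and squares are non-negative in any $f$-ring (as already noted in the discussion that $R$ itself is indecomposable). Fixing a faithful generating algebra of idempotents $B$ of $S$ and writing an arbitrary $s \in S$ in its unique full orthogonal form $s = \sum_{i=1}^n a_i e_i$ (Lemma~\ref{lem:2.1}), I would show that $s \geq 0$ iff every $a_i \geq 0$ in $R$. The ``if'' direction is immediate from the $\ell$-algebra axiom together with closure of the positive cone under addition. For the ``only if'' direction, multiplying $s \geq 0$ by $e_j \geq 0$ yields $a_j e_j = s e_j \geq 0$; if $a_j < 0$, then $-a_j \geq 0$ in the totally ordered $R$, so $-(a_j e_j) = (-a_j) e_j \geq 0$ as well, which forces $a_j e_j = 0$ by antisymmetry, and faithfulness of $e_j$ (Definition~\ref{def of Specker}) then gives $a_j = 0$, contradicting $a_j < 0$. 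Since this characterization of the positive cone is purely algebraic, any two $f$-algebra orders on $S$ must coincide.

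The main obstacle is the uniqueness half, and inside it the crux is the translation of the abstract $f$-algebra data on $S$ into a coefficientwise sign statement on the full orthogonal decomposition. The two tools that make this translation work are faithfulness of idempotents (which upgrades $a_j e_j = 0$ to $a_j = 0$) and the totality of the order on $R$ (which supplies $-a_j \geq 0$ from $a_j < 0$); with both in hand the proof reduces to the short computation above. An incidental bonus is that this characterization of the positive cone also shows the unique order obtained via existence coincides with the pointwise order, so the two halves of the argument are mutually consistent without any separate verification.
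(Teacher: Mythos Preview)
Your proposal is correct and follows essentially the same approach as the paper: existence via the pointwise order on $C(X,R_{\func{disc}})$, and uniqueness by showing that in any $f$-algebra order the positive cone is characterized coefficientwise on orthogonal decompositions using positivity of idempotents, multiplication by $e_j$, total order on $R$, and faithfulness. The only cosmetic difference is that the paper verifies the $f$-ring condition on $C(X,R_{\func{disc}})$ directly rather than descending from $R^X$, and uses the orthogonal form with nonzero coefficients rather than the full orthogonal form, but the arguments are the same in substance.
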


\begin{proof}
By Theorem \ref{new def}, we identify $S$ with $C(X,R_{\func{disc}})$ for some Stone space $X$. Since $R$ is totally ordered, there is a partial order on $S$, defined by $f \le g$ if $f(x) \le g(x)$ for each $x \in X$. It is elementary to see that $S$ with this partial order is an $\ell$-algebra over $R$. Let $f, g \in S$ with $f \wedge g = 0$ and let $h \ge 0$. Then, for each $x \in X$, either $f(x) = 0$ or $g(x) = 0$. Therefore, $fh(x) = 0$ or $g(x) = 0$, so $fh \wedge g = 0$. Thus, $S$ is an $f$-ring, and so is an $f$-algebra.

To prove uniqueness, suppose we have a partial order $\le^\prime$ on $S$ for which $(S,\le^\prime)$ is an $f$-algebra over $R$. As squares in $S$ are positive \cite[Sec.~XVII, Lem.~2]{Bir79}, idempotents in $S$ are positive. Let $f \in S$ be nonzero, and write $f = \sum a_i \chi_{U_i}$ for some nonzero $a_i \in R$ and $U_i$ nonempty pairwise disjoint clopen subsets of $X$. Since the $a_i$ are distinct nonzero values of $f$, we see that $0 \le f$ iff each $a_i \ge 0$. Therefore, if $0 \le f$, then $0 \le^\prime f$. Conversely, let $0 \le^\prime f$ and let $f = \sum a_i \chi_{U_i}$ as above. Note that $f\chi_{U_j} = a_j \chi_{U_j}$ for each $j$. Since $0 \le^\prime f,\chi_{U_j}$, we have $0\le^\prime f\chi_{U_j}$, so $0\le^\prime a_j\chi_{U_j}$. As $0 \ne \chi_{U_j}$, if $a_j < 0$, then $-a_j > 0$, and since $S$ is an $f$-algebra, $0 \le^\prime (-a_j)\chi_{U_j}$. Therefore, $a_j \chi_{U_j} \le^\prime 0$. This implies $a_j \chi_{U_j} = 0$, which is impossible since the $\chi_{U_j}$ are faithful idempotents and $a_j \ne 0$. Thus, $a_j \ge 0$ for each $j$. Consequently, $0 \le f$, and so $\le^\prime$ is equal to $\le$.
\end{proof}

%\begin{remark}\label{positive cone}
%{\color{red}\bf We may not want this and put it directly in the proof that algebra homs are $\ell$-homs.}
%
%Let $S \in {\bf Sp}_R$. Then the unique partial order on $S$ has positive cone
%\[
%\left\{ \sum_i a_i e_i : a_i \ge 0, e_i \in \Id(S) \right\}.
%\]
%This follows immediately from identifying $S = C(X,R_{\func{disc}})$ and using $f \ge 0$ iff $f(x) \ge 0$ for all $x \in X$.
%\end{remark}

\begin{remark}
Ribenboim \cite[Thm.~5]{Rib69} shows that when $B$ is a Boolean algebra, the order on ${\mathbb{Z}}$ lifts to the Boolean power of ${\mathbb{Z}}$ by $B$ in such a way that the resulting Abelian group is an $\ell$-group. His approach is through Foster's version of Boolean powers (see Remark~\ref{Foster remark}), while Theorem~\ref{order prop} recovers his result via J$\acute{\mathrm{o}}$nsson's interpretation of Boolean powers. In this sense our proof is similar in spirit to Conrad's point of view of Specker $\ell$-groups, which emphasizes the fact that such an $\ell$-group can be viewed as a subdirect product of copies of ${\mathbb{Z}}$, and hence inherits the
%cardinal
order from this product; see \cite[Sec.~4]{Con74}.
\end{remark}

Let $R$ be totally ordered and let $S$ and $T$ be $\ell$-algebras over $R$. We recall that an \emph{$\ell$-algebra homomorphism} $\alpha:S\to T$ is an $R$-algebra homomorphism that is in addition a lattice homomorphism. The following corollary allows us to conclude that when $R$ is totally ordered, then an $R$-algebra homomorphism between Specker $R$-algebras is automatically an $\ell$-algebra homomorphism, thus the category of Specker $R$-algebras and $\ell$-algebra homomorphisms is a full subcategory of the category of commutative $R$-algebras and $R$-algebra homomorphisms. The corollary is motivated by a similar result for rings of real-valued continuous functions \cite[Thm.~1.6]{GJ60}, and its proof is a modification of the proof of that result.

\begin{corollary}\label{order cor}
If $S,T \in {\bf Sp}_R$, then each $R$-algebra homomorphism $\alpha:S \rightarrow T$ is an $\ell$-algebra homomorphism.
\end{corollary}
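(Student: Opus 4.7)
The plan is to identify $S$ and $T$ with $C(X,R_{\func{disc}})$ and $C(Y,R_{\func{disc}})$ via Theorem~\ref{new def}, and then exploit the fact, visible from the proof of Theorem~\ref{order prop}, that the unique $f$-algebra orders on $S$ and $T$ are the pointwise ones. Under this identification, an element $s \in S$ written in full orthogonal form $s = \sum_{i=1}^n a_i e_i$ with distinct $a_i \in R$, nonzero orthogonal $e_i \in \Id(S)$, and $\bigvee e_i = 1$ satisfies $s \geq 0$ iff every $a_i \geq 0$. I will show that $\alpha$ preserves positivity and joins; meets then follow from $s \wedge t = -((-s) \vee (-t))$, so $\alpha$ will be an $\ell$-algebra homomorphism.

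For positivity, take $s = \sum a_i e_i \geq 0$ in full orthogonal form, so each $a_i \geq 0$. Then $\alpha(s) = \sum a_i\,\alpha(e_i)$, where the $\alpha(e_i)$ are orthogonal in $T$ (since $\alpha(e_i)\alpha(e_j) = \alpha(e_i e_j) = 0$ for $i \neq j$) and $\sum_i \alpha(e_i) = \alpha(1) = 1$. Discarding any vanishing $\alpha(e_i)$ leaves a full orthogonal decomposition of $\alpha(s)$ in $T$ with non-negative coefficients, so $\alpha(s) \geq 0$. For joins, given $s,t \in S$, refine their orthogonal decompositions to a common full orthogonal partition $\{e_1,\ldots,e_n\} \subseteq \Id(S)$ by the standard meet-refinement procedure in the proof of Lemma~\ref{lem:2.1}, and write $s = \sum a_i e_i$ and $t = \sum b_i e_i$. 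Then the pointwise formula yields $s \vee t = \sum \max(a_i,b_i)\,e_i$, and applying $\alpha$ gives $\alpha(s \vee t) = \sum \max(a_i,b_i)\,\alpha(e_i)$. The same pointwise formula, computed in $T$ on the orthogonal family $\{\alpha(e_i)\}$, gives $\alpha(s) \vee \alpha(t) = \sum \max(a_i,b_i)\,\alpha(e_i)$, and these agree.

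The only genuine subtlety is bookkeeping around $\alpha(e_i)$'s that vanish or might a priori coincide in $T$: orthogonality forces any coincident pair to be zero (an idempotent equal to its own orthogonal partner squares to $0$), and zero idempotents contribute nothing to sums, joins, or positivity, so no harm is done in either direction of the computation. The substance of the argument is simply that an $R$-algebra homomorphism commutes with the formation of orthogonal decompositions, while the $f$-algebra order, being pointwise on $X$, is computed coordinatewise on any such decomposition; this parallels the proof of \cite[Thm.~1.6]{GJ60} cited in the statement.
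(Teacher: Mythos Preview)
Your proof is correct, and the first step (order-preservation via the description of the positive cone as $R$-linear combinations of idempotents with nonnegative coefficients) coincides with the paper's. Where you diverge is in the preservation of the lattice operations: the paper follows the Gillman--Jerison template by using the absolute value, showing $\alpha(|s|) = |\alpha(s)|$ via uniqueness of positive square roots in an $f$-ring, then invoking the identity $2(a\vee b) = a+b+|a-b|$ and cancelling the factor $2$ using torsion-freeness of $\ell$-groups. You instead compute $s\vee t$ directly on a common full orthogonal refinement as $\sum \max(a_i,b_i)\,e_i$ and push this through $\alpha$, observing that the same coordinatewise formula holds for $\alpha(s)\vee\alpha(t)$ on the orthogonal family $\{\alpha(e_i)\}$ in $T$. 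Your route is more elementary and better adapted to the idempotent-generated structure at hand, avoiding the external citations to square-root uniqueness and torsion-freeness; the paper's route is closer to the classical $C(X)$ argument and would transfer more readily to settings without such explicit decompositions. Your remark about coincident $\alpha(e_i)$'s is harmless but unnecessary: the computation of $\alpha(s)\vee\alpha(t)$ only uses that the $\alpha(e_i)$ are orthogonal and sum to $1$, not that they are distinct.
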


\begin{proof}
Identifying $S$ with $C(X,R_{\func{disc}})$ and using $f \ge 0$ iff $f(x) \ge 0$ for all $x \in X$, we see that the unique partial order on $S$ has positive cone
\[
\left\{ \sum a_i e_i : a_i \ge 0, e_i \in \Id(S) \right\}.
\]
From the description of the positive cone it follows that $\alpha$ is order-preserving. Let $s \in S$. We recall that the $\ell$-ring $S$ has an absolute value. Since $S = C(X,R_{\func{disc}})$, we may define it explicitly as $|s|(x) = |s(x)|$ for each $x \in X$. Then $|s|^2 = s^2$ and $\alpha(|s|)^2 = \alpha(|s|^2) = \alpha(s^2) = \alpha(s)^2$. Therefore, as $\alpha(|s|) \ge 0$ and an element of an $\ell$-ring has at most one positive square root, $\alpha(|s|) = |\alpha(s)|$ (see, e.g., \cite[Ch.\ XVII]{Bir79}). Because of the $\ell$-ring formula
\[
2(a \vee b) = a+b-|a-b|,
\]
we have $\alpha(2(a \vee b)) = \alpha(a) + \alpha(b) - |\alpha(a)-\alpha(b)|$. Consequently, $2\alpha(a \vee b) = 2(\alpha(a) \vee \alpha(b))$. Since each nonzero element of an $\ell$-group has infinite order (see, e.g., \cite[Sec.~XIII, Cor.~3.1]{Bir79}), $\alpha(a \vee b) = \alpha(a) \vee \alpha(b)$. Thus, $\alpha$ is an $\ell$-algebra homomorphism.
%
%OLD PROOF. Let $s,t \in S$, and write $s = \sum a_i e_i$ and $t = \sum b_i e_i$ in orthogonal form with the same set of idempotents from a generating algebra in $\Id(S)$. Then it follows from Theorem~\ref{order prop} that $s \wedge t = \sum \min\{a_i,b_i\}e_i$ and $s \vee t = \sum \max\{a_i,b_i\}e_i$. Since $\alpha(a)=a$ for each $a\in R$, it is clear from these formulas that $\alpha(s\wedge t) = \alpha(s) \wedge \alpha(t)$ and $\alpha(s \vee t) = \alpha(s) \vee \alpha(t)$. Thus, $\alpha$ is an $\ell$-algebra homomorphism.
\end{proof}

We conclude this article with a few comments on the interpretation of the Boolean power representation of a Specker $R$-algebra when $R$ is a totally ordered ring. As follows from Theorem~\ref{new def}, Specker $R$-algebras are represented as $X^*= C(X,R_{\func{disc}})$, where $X$ is a Stone space and $R_{\func{disc}}$ is viewed as a discrete space. Since $R$ is totally ordered, we can equip $R$ with the interval topology. Sometimes this interval topology is discrete, e.g.~when $R=\mathbb Z$, but often it is not, e.g.~when $R=\mathbb R$. In this situation, there is another natural object to study, namely the algebra $C(X,R)$ of continuous functions from a Stone space $X$ to $R$, where $R$ has the interval topology. As the discrete topology is finer than the interval topology, we have that $C(X,R_{\func{disc}})$ is an $R$-subalgebra of $C(X,R)$. Often $C(X,R_{\func{disc}})$ is a proper $R$-subalgebra of $C(X,R)$. For example, if $X$ is the one-point compactification of $\mathbb N$, then $f:X\to\mathbb R$ given by $f(n)=1/n$ and $f(\infty)=0$ is in $C(X,\mathbb R)-C(X,\mathbb R_{\func{disc}})$. Let $FC(X,R)$ be the set of finitely-valued continuous functions from $X$ to $R$. It is obvious that $FC(X,R)$ is an $R$-subalgebra of $C(X,R)$.

\begin{proposition}\label{prop:3.4}
$C(X,R_{\func{disc}}) = FC(X,R)$.
\end{proposition}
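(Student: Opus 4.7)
The plan is to prove the equality by showing two containments, where the forward containment is an easy compactness argument and the reverse relies on separating the finite value set by open intervals in the order topology on $R$.

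First I would handle the containment $C(X,R_{\func{disc}}) \subseteq FC(X,R)$. Given $f \in C(X,R_{\func{disc}})$, the image $f(X)$ is a compact subspace of the discrete space $R_{\func{disc}}$, hence finite; so $f$ is finitely-valued. Moreover, since the discrete topology is finer than the interval topology, the identity $R_{\func{disc}} \to R$ is continuous, so $f$ is continuous as a map into $R$ with the interval topology. Thus $f \in FC(X,R)$.

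For the reverse containment $FC(X,R) \subseteq C(X,R_{\func{disc}})$, suppose $f \in FC(X,R)$ and list its finitely many values in increasing order as $a_1 < a_2 < \cdots < a_n$ (this is possible because $R$ is totally ordered). Set $a_0 = -\infty$ and $a_{n+1} = +\infty$ formally, so that $(a_{i-1}, a_{i+1})$ denotes an open set in the interval topology for each $i$ (where $(a_0, a_2) = \{x : x < a_2\}$ and $(a_{n-1}, a_{n+1}) = \{x : x > a_{n-1}\}$). The key observation is that
\[
f^{-1}(\{a_i\}) = f^{-1}\bigl((a_{i-1}, a_{i+1})\bigr),
\]
since the only value of $f$ strictly between $a_{i-1}$ and $a_{i+1}$ is $a_i$ itself. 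Because $f$ is continuous into the interval topology, each set $f^{-1}((a_{i-1}, a_{i+1}))$ is open in $X$, so each $f^{-1}(\{a_i\})$ is open. Since $X = \bigsqcup_{i=1}^n f^{-1}(\{a_i\})$ is a finite disjoint union, each $f^{-1}(\{a_i\})$ is also closed, hence clopen. Therefore, for any subset $A \subseteq R$, $f^{-1}(A) = \bigcup_{a_i \in A} f^{-1}(\{a_i\})$ is clopen (in particular, open) in $X$, which means $f$ is continuous as a map into $R_{\func{disc}}$. Thus $f \in C(X, R_{\func{disc}})$.

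The only mildly subtle point, which I would expect to be the main (minor) obstacle, is the separation step: writing down explicitly that finitely many distinct points in a totally ordered set can be separated by order-open intervals, and taking care of the boundary indices $i=1$ and $i=n$ so that the ``interval'' $(a_{i-1}, a_{i+1})$ really is open in the order topology on $R$. Everything else is formal: the compactness argument in one direction, and the finite-partition argument in the other.
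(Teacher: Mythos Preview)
Your proof is correct and follows essentially the same two-inclusion strategy as the paper. The forward inclusion is handled the same way in spirit (finite image plus continuity into the coarser topology), though you invoke compactness of $X$ directly while the paper cites the Specker-algebra structure to get finiteness.

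The only noteworthy difference is in the reverse inclusion. You use the total order explicitly, separating the finitely many values $a_1<\cdots<a_n$ by open order-intervals to show each $f^{-1}(\{a_i\})$ is \emph{open}, and then conclude clopenness from the finite partition. The paper instead observes that the interval topology is Hausdorff, so singletons are closed, hence each $f^{-1}(\{a_i\})$ is \emph{closed}, and then concludes clopenness from the same finite-partition argument. Both routes are equally short; the paper's is marginally slicker because it avoids the boundary-index bookkeeping you flagged, while yours has the virtue of making the role of the order completely explicit.
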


\begin{proof}
Let $f\in C(X,R_{\func{disc}})$. Since $C(X,R_{\func{disc}})$ is a Specker $R$-algebra, $f$ is finitely-valued. Let $\{a_1,\dots,a_n\}$ be the values of $f$. Then $\{f^{-1}(a_1),\dots,f^{-1}(a_n)\}$ is a partition of $X$. As the interval topology is Hausdorff, each $f^{-1}(a_i)$ is closed, so $\{f^{-1}(a_1),\dots,f^{-1}(a_n)\}$ is a partition of $X$ into finitely many closed sets. This implies that each $f^{-1}(a_i)$ is clopen. Therefore, $f\in FC(X,R)$. Conversely, if $f\in FC(X,R)$, then $f$ is a finitely-valued function in $C(X,R)$. Using again that the interval topology is Hausdorff, we conclude that $f\in C(X,R_{\func{disc}})$. Thus, $FC(X,R)=C(X,R_{\func{disc}})$.
\end{proof}

\begin{remark}
One way to think about $FC(X,R)$ is as piecewise constant continuous functions from $X$ to $R$. We recall (see, e.g., \cite[Example~2.4.3]{BMO13a}) that a continuous function $f:X\to R$ is piecewise constant if there exist a clopen partition $\{P_1,\dots,P_n\}$ of $X$ and $a_i\in R$ such that $f(x)=a_i$ for each $x\in P_i$. Let $PC(X,R)$ be the subset of $C(X,R)$ consisting of piecewise constant functions. Then it is obvious that $PC(X,R)$ is an $R$-subalgebra of $C(X,R)$, and it follows from the definitions of $FC(X,R)$ and $PC(X,R)$ that $FC(X,R)=PC(X,R)$. By Proposition~\ref{prop:3.4}, $PC(X,R)=C(X,R_{\func{disc}})$. Thus, when $R$ is totally ordered, another way to think about the Boolean power of $R$ by $B$ is as the $R$-algebra of piecewise constant continuous functions from the Stone space $X$ of $B$ to $R$, where $R$ has the interval topology. Consequently, for a totally ordered  ring $R$, we obtain the following two representations of a Specker $R$-algebra: as the $R$-algebra $C(X, R_{\func{disc}})$ or as the $R$-algebra $PC(X,R)$. In \cite[Sec.~5]{BMO13a} it is proved that a Specker $\mathbb R$-algebra is isomorphic to $PC(X,\mathbb R)$. As follows from the discussion above, this result is a particular case of Theorem~\ref{new def}.
\end{remark}

\bibliographystyle{amsplain}
\bibliography{Gelfand}

\bigskip

\noindent Department of Mathematical Sciences, New Mexico State University, Las Cruces NM 88003-8001

\bigskip

\noindent Dipartimento di Matematica ``Federigo Enriques," Universit\`a degli Studi di Milano, via Cesare Saldini 50, I-20133 Milano, Italy

\bigskip

\noindent gbezhani@nmsu.edu, vincenzo.marra@unimi.it, pmorandi@nmsu.edu, oberdin@nmsu.edu

\end{document}